\definecolor{ffffqq}{rgb}{1,1,0}
\definecolor{qqffqq}{rgb}{0,1,0}
\definecolor{ttttff}{rgb}{0.2,0.2,1}
\definecolor{ffqqtt}{rgb}{1,0,0.2}
\definecolor{wwqqww}{rgb}{0.4,0,0,4}
\definecolor{wwffqq}{rgb}{0.4,1,0}
\definecolor{fffftt}{rgb}{1,1,0.2}
\definecolor{ccqqcc}{rgb}{0.8,0,0.8}
\definecolor{ffffww}{rgb}{1,1,0.4}
\definecolor{ttfftt}{rgb}{0.2,1,0.2}
\definecolor{fffftt}{rgb}{1,1,0.2}
\definecolor{ccqqww}{rgb}{0.8,0,0.4}
\definecolor{ffffww}{rgb}{1,1,0.4}
\definecolor{ttfftt}{rgb}{0.2,1,0.2}
\definecolor{uququq}{rgb}{0.25,0.25,0.25}
\definecolor{ffqqtt}{rgb}{1,0,0.2}
\definecolor{ttffff}{rgb}{0.2,1,1}
\definecolor{fffftt}{rgb}{1,1,0.2}
\definecolor{wwffqq}{rgb}{0.4,1,0}
\definecolor{qqqqff}{rgb}{0,0,1}
\definecolor{uququq}{rgb}{0.25,0.25,0.25}
\definecolor{ffqqtt}{rgb}{1,0,0.2}
\definecolor{fffftt}{rgb}{1,1,0.2}
\definecolor{wwffqq}{rgb}{0.4,1,0}
\definecolor{qqqqff}{rgb}{0,0,1}
\definecolor{ttfftt}{rgb}{0.2,1,0.2}
\definecolor{qqqqff}{rgb}{0,0,1}
\definecolor{ffqqtt}{rgb}{1,0,0.2}
\definecolor{fffftt}{rgb}{1,1,0.2}
\definecolor{ffffqq}{rgb}{1,1,0}
\definecolor{qqfftt}{rgb}{0,1,0.2}
\tikzset { domaine/.style 2 args={domain=#1:#2} }
\tikzset{
xmin/.store in=\xmin, xmin/.default=-3, xmin=-3,
xmax/.store in=\xmax, xmax/.default=3, xmax=3,
ymin/.store in=\ymin, ymin/.default=-3, ymin=-3,
ymax/.store in=\ymax, ymax/.default=3, ymax=3,
}
\definecolor{ffffqq}{rgb}{1,1,0}
\definecolor{ttfftt}{rgb}{0.2,1,0.2}
\definecolor{qqffcc}{rgb}{0,1,0.8}
\definecolor{ffwwqq}{rgb}{1,0.4,0}
\definecolor{ffqqqq}{rgb}{1,0,0}
\definecolor{ttffqq}{rgb}{0.2,1,0}
\definecolor{qqqqff}{rgb}{0,0,1}
\definecolor{ffqqtt}{rgb}{1,0,0.2}
\definecolor{ffffqq}{rgb}{1,1,0}
\definecolor{qqfftt}{rgb}{0,1,0.2}
\numberwithin{equation}{section}
\providecommand{\U}[1]{\protect\rule{.1in}{.1in}}
\definecolor{linkcolor}{rgb}{0.00,0.50,0.00}
\providecommand{\U}[1]{\protect\rule{.1in}{.1in}}
\newtheorem{theorem}{Theorem}[section]
\newtheorem{proposition}[theorem]{Proposition}
\newtheorem{lemma}[theorem]{Lemma}
\theoremstyle{remark}
\newtheorem{remark}[theorem]{Remark}
\numberwithin{equation}{section}
\newcommand{\sigmatau}{\sigma^{(\tau)}}
\newcommand{\dd}{\mathrm{d}}
\newcommand{\res}{\mathop{\hbox{\vrule height 7pt width .5pt depth 0pt \vrule height .5pt width 6pt depth 0pt}}\nolimits}
\newcommand{\deb}{\rightharpoonup}
\newcommand{\surf}{\mathcal{H}^{d-1}}
\newcommand{\lcal}{\mathcal L}
\newcommand{\impl}{\Rightarrow}
\newcommand{\ve}{\varepsilon}
\newcommand{\R}{\mathbb R}
\newcommand{\ind}{\mathbbm 1}
\DeclareMathOperator{\spt}{spt}
\DeclareMathOperator{\Lip}{Lip}
\DeclareMathOperator{\diam}{diam}
\definecolor{zzttqq}{rgb}{0.6,0.2,0}
\definecolor{uququq}{rgb}{0.25,0.25,0.25}
\definecolor{qqqqff}{rgb}{0,0,1}
\definecolor{xdxdff}{rgb}{0.49,0.49,1}
\definecolor{ffffqq}{rgb}{1,1,0}
\definecolor{fftttt}{rgb}{1,0.2,0.2}
\definecolor{ffqqqq}{rgb}{1,0,0}
\definecolor{ffqqtt}{rgb}{1,0,0.2}
\definecolor{ffttqq}{rgb}{1,0.2,0}
\definecolor{xdxdff}{rgb}{0.49,0.49,1}
\definecolor{qqqqff}{rgb}{0,0,1}
\definecolor{ffttww}{rgb}{1,0.2,0.4}
\definecolor{ttqqcc}{rgb}{0.2,0,0.8}
\definecolor{ffqqww}{rgb}{1,0,0.4}
\definecolor{xdxdff}{rgb}{0.49,0.49,1}
\definecolor{qqqqff}{rgb}{0,0,1}
\definecolor{uququq}{rgb}{0.25,0.25,0.25}
\definecolor{ttffww}{rgb}{0.2,1,0.4}
\definecolor{ffqqqq}{rgb}{1,0,0}
\definecolor{qqfftt}{rgb}{0,1,0.2}
\definecolor{zzttqq}{rgb}{0.6,0.2,0}
\definecolor{ffffqq}{rgb}{1,1,0}
\definecolor{qqffww}{rgb}{0,1,0.4}
\definecolor{xdxdff}{rgb}{0.49,0.49,1}
\definecolor{qqqqff}{rgb}{0,0,1}
\definecolor{ffqqqq}{rgb}{1,0,0}
\definecolor{ttqqff}{rgb}{0.2,0,1}
\definecolor{ttqqcc}{rgb}{0.2,0,0.8}
\definecolor{uququq}{rgb}{0.25,0.25,0.25}
\definecolor{xdxdff}{rgb}{0.49,0.49,1}
\definecolor{qqqqff}{rgb}{0,0,1}
\definecolor{ffqqtt}{rgb}{1,0,0.2}
\definecolor{qqffqq}{rgb}{0,1,0}
\definecolor{qqfftt}{rgb}{0,1,0.2}
\definecolor{ffffqq}{rgb}{1,1,0}
\definecolor{ffttcc}{rgb}{1,0.2,0.8}
\definecolor{ffwwqq}{rgb}{1,0.4,0}
\definecolor{ffqqqq}{rgb}{1,0,0}
\definecolor{ffffqq}{rgb}{1,1,0}
\definecolor{qqfftt}{rgb}{0,1,0.2}
\definecolor{ffttcc}{rgb}{1,0.2,0.8}
\definecolor{ffwwqq}{rgb}{1,0.4,0}
\definecolor{ffqqqq}{rgb}{1,0,0}
\definecolor{ffqqtt}{rgb}{1,0,0.2}
\definecolor{ffffqq}{rgb}{1,1,0}
\definecolor{ttffqq}{rgb}{0.2,1,0}
\definecolor{wwwwww}{rgb}{0.4,0.4,0.4}
\definecolor{zzqqzz}{rgb}{0.6,0,0,6}
\begin{document}
\title[BV least gradient problem and transport densities]
{$L^p$ bounds for boundary-to-boundary transport densities, and $W^{1,p}$ bounds for the BV least gradient problem in 2D
}
\author[S. Dweik, F. Santambrogio]{Samer Dweik and Filippo Santambrogio}
\address{Laboratoire de Math\'ematiques d'Orsay, Univ. Paris-Sud, CNRS, Universit\'e Paris-Saclay, 91405 Orsay Cedex, France}
\email{samer.dweik@math.u-psud.fr, filippo.santambrogio@math.u-psud.fr}
\maketitle
\begin{abstract}
The least gradient problem (minimizing the total variation with given boundary data) is equivalent, in the plane, to the Beckmann minimal-flow problem with source and target measures located on the boundary of the domain, which is in turn related to an optimal transport problem. Motivated by this fact, we prove $L^p$ summability results for the solution of the Beckmann problem in this setting, which improve upon previous results where the measures were themselves supposed to be \,$L^p$. In the plane, we carry out all the analysis for general strictly convex norms, which requires to first introduce the corresponding optimal transport tools. We then obtain results about the $W^{1,p}$ regularity of the solution of the anisotropic least gradient problem in uniformly convex domains.
\end{abstract}
\section{Introduction} \label{intro}
A classical problem in calculus of variations, which is of interest both with applications in image processing but also for its connection with minimal surfaces, is the so-called least gradient problem, considered for instance in \cite{Bom,55,Rybka,22,33,44}. This is the problem of minimizing the total variation of the vector measure \,$\nabla u$\, among all BV functions $u$ defined on an open domain \,$\Omega$\, with given boundary datum. To be more general, we will directly consider the anisotropic case (\cite{Mercier,Mazonaniso}), using an arbitrary strictly convex norm $\varphi$ in \,$\mathbb{R}^d$. Consider\\
\begin{equation} \label{leastgradient-inf}
\inf\bigg\{\int_\Omega \varphi(\nabla u)\,:\,u \in BV(\Omega),\,u_{|\partial\Omega}=g\bigg\},
\end{equation}\\
where $u_{|\partial\Omega}$\, denotes the trace of \,$u$ in the sense of BV functions and $\int_\Omega \varphi(\nabla u)$ is the $\varphi-$total variation measure of $\nabla u$ (i.e., $\int_\Omega \varphi(\nabla u):=\int_\Omega \varphi(\frac{d \nabla u}{d |\nabla u|}(x))\,\mathrm{d}|\nabla u|$), this problem relaxes into\\
\begin{equation} \label{leastgradient-pen}
\min\bigg\{\int_\Omega \varphi(\nabla u)+\int_{\partial\Omega}|u_{|\partial\Omega}-g|\,\dd\surf\,:\,u \in BV(\Omega)\bigg\}.
\end{equation}\\
This can also be expressed in the following way: extend $g$ into a BV function $\tilde g$ defined on a larger domain $\Omega'$, and then consider\\
\begin{equation} \label{leastgradient-ext}
\min\bigg\{\int_{\overline\Omega}\varphi(\nabla u)\,:\,u \in BV(\Omega'), \; u=\tilde g \,\mbox{ on }\, \Omega'\setminus \Omega \bigg\}.
\end{equation}\\
 The boundary datum $g$ should be taken as a possible trace of BV functions, i.e. in $L^1(\partial\Omega)$, yet, the fact that the (a) solution $u$ to \eqref{leastgradient-pen} and \eqref{leastgradient-ext} satisfies or not $u_{|\partial\Omega}=g$ could depend on $g$ (and on the domain). In case we have $u_{|\partial\Omega}=g$, then $u$ is also a solution of  \eqref{leastgradient-inf}. 
 In the Euclidean case (i.e., when $\varphi=|\cdot|$), the author of \cite{55} proves existence of solutions to \eqref{leastgradient-inf} for boundary data in $BV(\partial\Omega)$, while, in \cite{ST}, the authors give an example of a function \,$g$\, such that \eqref{leastgradient-inf} has no solution ($g$ was chosen to be the characteristic function of a certain fat Cantor set, which does not lie in $BV(\partial\Omega)$).
 
In this paper the anisotropic least gradient problem will only be considered in the planar case $\Omega \subset \mathbb{R}^2$, and the boundary datum \,$g$\, will be at least in $BV(\partial\Omega)$ (something which makes perfectly sense, since $\partial \Omega$ is a closed curve, and we are just speaking about BV functions in 1D). Following \cite{Rybka}, we can see that there is a one-to-one correspondence between vector measures $\nabla u$ in \eqref{leastgradient-ext}  (considered as measures on $\overline\Omega$, so that we also include the part of the derivative of $u$ which is on the boundary, i.e. the possible jump from $u_{|\partial\Omega}$ to $g$) and vector measures $w$ satisfying, in $\overline\Omega$, $\nabla\cdot w=f$ where $f$ is the measure obtained as the tangential derivative of $g\in BV(\partial\Omega)$; moreover, the mass of $\nabla u$ and of $w$ are the same. Indeed, one just needs to take $w=R_{\frac{\pi}{2}} \nabla u$, where $R_{\theta}$ denotes a rotation with angle $\theta$ around the origin, and $w$ solves the following problem\\
\begin{equation} \label{Beckmann}
\inf\bigg\{ ||w||(\overline\Omega)\,:\,w \in \mathcal M^2(\overline\Omega),\,\nabla\cdot w=f\bigg\},
\end{equation}\\
where $\mathcal M^d(\overline\Omega)$ is the space of finite vector measures on $\overline\Omega$ valued in $\R^d$ (here, $d=2$), $|| \cdot ||$ is the {\it rotation-norm} of $\varphi$ (i.e. $||v||:=\varphi(R_{-\frac{\pi}{2}} v)$ for every $v \in \mathbb{R}^2$) and, $||w||$ denotes the variation measure associated with the vector measure $w$, i.e., $||w||(E):=\sup\left\{\sum_{i} ||w(A_i)||\right\}$, for every measurable set $E \subset \Omega$, where the supremum is taken over all partitions $E=\bigcup_i A_i$\, into a countable number of disjoint measurable subsets.
If we identify $f=\partial g / \partial \mathbf{t}$ ($\mathbf{t}:=R_{-\frac{\pi}{2}}\mathbf{n}$ standing for the tangent vector to $\partial\Omega$) with its restriction to the boundary, we can also write the condition $\nabla\cdot w=f$ as $\nabla\cdot w=0$  in $\Omega,\,w \cdot \mathbf{n}=f $ on $\partial\Omega$ (in general, when we write $\nabla\cdot w=f$ we mean $\int \nabla\phi\cdot \dd w=-\int \phi\,\dd f$ for every smooth test function $\phi$, without imposing $\phi$ to have compact support, i.e. we also include boundary conditions).\\

The study of the anisotropic least gradient problem can consequently be done by studying \eqref{Beckmann}, and the question whether \eqref{leastgradient-inf} has a solution becomes whether the solution to \eqref{Beckmann} gives mass to the boundary or not.\\

The important point is that  the Beckmann problem is strongly related to optimal transport theory, and is in some sense equivalent to the Monge problem \\
\begin{equation}   \label{Kanto}
  \min\left\{\int_{ \overline{\Omega} \times \overline{\Omega}}||x-y||\,\mathrm{d}\gamma:\;
  \gamma \in \mathcal{M}^+(\overline{\Omega} \times \overline{\Omega}),\,(\Pi_x)_{\#} \gamma=f^+\,\,\mbox{ and}\,\,\,(\Pi_y)_{\#} \gamma=f^- \right\},
  \end{equation}\\
  where $f^\pm$ represent the positive and negative parts of $f$, i.e. two positive measures with the same mass
(see for instance \cite[Chapter 4]{8} for details about this equivalence in the Euclidean case, i.e. when $||\cdot||=|\cdot|$, or Section \ref{Sec.2} to the case of a general strictly convex norm $||\cdot||$). The scalar measure $\sigma=||w||$ obtained from an optimal $w$ is called {\it transport density} (see Section \ref{Sec.2}).

Because of its many connections to shape optimization problems (\cite{BouBut JEMS, BouButSep}), traffic congestion (\cite{BraCarSan}), image processing (\cite{LelLorSchVal}),\,\dots, many results are available in the literature about the transport density, and in particular its summability (most of these results concern only the Euclidean case). Of course, $L^p$ summability of \,$\sigma$\, is equivalent to \,$W^{1,p}$\, regularity of the optimal $u$. 

In the Euclidean case, $L^p$ summability results on the transport density have been analyzed in \cite{1, 2, 3, 7}: in dimension $d$,  for $p<d/(d-1)$, we have $w \in L^p$ as soon as at least one between $f^+$ or $f^-$ is in $L^p$, while for general $p$ (including $p=\infty$), this is proven when both \,$f^+$ and \,$f^-$ belong to $L^p(\Omega)$. But in the case of interest for applications to the least gradient problem, the measures $f^\pm$ are singular (they are concentrated on the negligible set $\partial\Omega$).
 The only result obtained so far with measures concentrated on the boundary is the one that we presented in \cite{5}, where we considered the case where $f^-$ is the projection of $f^+$ on $\partial\Omega$. On the other hand, this is far from the setting that we want to study now, since, first, in \cite{5}, only one of the two measures is on $\partial\Omega$, and second, it is not an arbitrary measure but it is chosen to be the projection of the other. 

We can say that, so far, the $L^p$ summability of \,$w$\, in the case where both the measures \,$f^+$ and \,$f^-$ are concentrated on the boundary is unknown (a counterexample is presented in a particular case in \cite{5}). In particular,
we do not know whether the optimal flow $w$ belongs or not to $L^p(\Omega)$ provided \,$f^\pm \in L^p(\partial\Omega)$. The goal of the present paper is exactly to investigate this kind of $L^p$ summability results under suitable assumptions on the domain $\Omega$, and then applying them to the $W^{1,p}$ regularity of the solution of \eqref{leastgradient-inf}. 

The paper is organized as follows. In Section \ref{Sec.2}, we adapt some well-known facts concerning the Monge-Kantorovich problem to the precise setting of transport from the boundary to the boundary.
In Section \ref{Sec.3}, we show positive results on the $L^p$ summability of $\sigma$ in the transport from a measure on the boundary to a measure on the boundary of a uniformly convex domain, for \,$d=2$\, in the case of a general strictly convex norm $||\cdot||$, or in arbitrary dimension \,$d\geq 2$\, with \,$||\cdot||=|\cdot|$. In particular, we see that $f^\pm\in L^p(\partial\Omega)\impl \sigma\in L^p(\Omega)$ holds for $p\leq 2$ (to go beyond $L^2$ summability one needs extra regularity of the data) in the case \,$d=2$\, with a general norm $||\cdot||$, or for every \,$d\geq 2$\, with \,$||\cdot||=|\cdot|$. Section \ref{Sec.4} gives indeed a counter-example where $f\in L^\infty$ but $\sigma\notin L^p$ for any $p>2$. Finally, Section \ref{Sec.5} summarizes the applications, in the case $d=2$, of these results to the anisotropic least gradient problem. 

Many of the results that we recover were already known, at least for the case of the Euclidean norm, thanks to different methods, but we believe that the connection with optimal transport and the technique we develop are interesting in themselves. Moreover, the generalizations to the anisotropic case are non-trivial via standard methods, while they are essentially straightforward via the present approach. As a last interesting point, to the best of our knowledge, the following statement is novel even in the Euclidean case: if $\Omega$ is a uniformly convex domain in dimension $2$, $p\leq 2$ and $g\in W^{1,p}(\partial\Omega)$, then the solution to \eqref{leastgradient-inf} exists, is unique, and belongs to $W^{1,p}(\Omega)$ (this is our Theorem \ref{onlynew}).

\section{Monge-Kantorovich and Beckmann problems} \label{Sec.2}

Let $||\cdot||$ be an arbitrary strictly convex norm in $\mathbb{R}^d$. Given two  finite positive Borel measures $f^+$ and $f^-$ on a compact convex domain $\Omega\subset\R^d$ (the closure of a non-empty convex open set, note that from now on, to make notation lighter, we will call $\Omega$ the closed domain, and not the open one), satisfying the mass balance condition $f^+(\Omega)=f^-(\Omega)$, we consider
the following minimization problem
\begin{equation}\label{kant|x-y|} \min\left\{\int_{ \Omega \times \Omega}||x-y||\,\mathrm{d}\gamma\,:\,\gamma \in \Pi(f^+,f^-)\right\},
  \end{equation}
  where
  $$\Pi(f^+,f^-):=\left\{\gamma \in \mathcal{M}^+( \Omega \times \Omega):\;(\Pi_{x})_{ \#}\gamma =f^+\;,\;(\Pi_{y})_{ \#}\gamma =f^-\;\right\}.$$\\
This is a relaxation of the classical Monge optimal transportation problem \cite{Monge}, which is the following
$$\inf\left\{\int_{\Omega}||x-T(x)||\,\mathrm{d}f^+:\;T_\#f^+=f^-\right\}.$$ \\
Actually, these two problems are equivalent as soon as one can prove that there exists an optimal $\gamma$ in \eqref{kant|x-y|}  which is concentrated on the graph of a measurable map $T$, i.e. $\gamma=(id,T)_\#f^+$. This is the case whenever $f^+ \ll \mathcal{L}^d$: the existence of an optimal map $T$ in this problem (or the fact that an optimal $\gamma$ is of the form $(id,T)_\#f^+$) has been a matter of active study between the end of the '90s and the beginning of this century, and we cite in particular \cite{ambro,ambros,Evans, TW,Caf} for the case of the Euclidean norm. For different norms, see \cite{Car,Cham-str} when the norm is strictly convex, and 
\cite{Cham} for the general case. However, since we are interested in transport problems where $f^+$ is concentrated on the negligible set $\partial\Omega$, we will discuss later a specific technique in this particular case.

We underline that, by a suitable inf-sup exchange procedure, it is possible to see that the following maximization problem
\begin{equation} \label{dual}
\max\left\{\int_{\Omega}\phi\,\mathrm{d}(f^+-f^-):\;||\nabla \phi||_{\star,\infty} \leq 1\right\},
\end{equation} \\ 
 is the dual of \,(\ref{kant|x-y|}) (its value equals $\min\,(\ref{kant|x-y|})$), where \\ 
 $$||\nabla \phi||_{\star,\infty}=\sup_{x \in \Omega} ||\nabla \phi (x)||_{\star}$$
and $$||v||_{\star}:=\sup\bigg\{v \cdot \xi  \,:\,||\xi|| \leq 1\bigg\},\,\,\,\mbox{for every}\,\,\,v \in \mathbb{R}^d.$$\\
 Note that, as soon as the domain \,$\Omega$\, is convex, the condition $||\nabla \phi||_{\star,\infty} \leq 1$ is nothing but the fact that $\phi$\, is $1-$Lip with respect to $||\cdot||$.  Moreover, the equality of the two optimal values implies that optimal \,$\gamma$\, and \,$\phi$\, satisfy \,$\phi(x)-\phi(y)=||x-y||$\, on the support of \,$\gamma$ but also that, whenever we find some admissible $\gamma$\, and \,$\phi$\, satisfying \,$\int_{\Omega \times \Omega}||x-y||\,\mathrm{d}\gamma=\int_{\Omega}\phi\,\mathrm{d}(f^+-f^-)$, they are both optimal. The maximizers in (\ref{dual}) are called {\it Kantorovich potentials}. Given a maximizer $\phi$, we call $\it{transport \,ray}$ any maximal segment $[x,y]$ such that \,$\phi(x) - \phi(y)=||x-y||$, and the important fact is that whenever a point $(x_0,y_0)$ belongs to the support of an optimal $\gamma$, then $x_0$ and $y_0$ must belong to a common transport ray. In addition, two different transport rays cannot intersect at an interior point of either of them (this strongly uses the strict convexity of the norm, see, for instance, \cite{8,Caf,Cham}). 
 
 In optimal transport theory it is classical to associate with any optimal transport plan \,$\gamma$\, a positive measure \,$\sigma_\gamma$\, on $\Omega$, called $\textit{transport density}$, which represents the amount of transport taking place in each region of $\Omega$. This measure \,$\sigma_\gamma$\, is defined via (this is in fact an adaptation of the definition of the transport density, see for instance \cite{8}, which is usually given in the Euclidean case) 
\begin{equation}\label{transport density def}
<\sigma_\gamma,\phi>=\int_{\Omega \times \Omega}\mathrm{d}\gamma(x,y)\int_{0}^{1}\phi(\omega_{x,y}(t))\,||{\omega}^\prime_{x,y}(t)||\,\mathrm{d}t\;\;\; \mbox{for all}\;\;\phi\;\in\;C(\Omega)
\end{equation}
where $\omega_{x,y}$ is a parameterization of the geodesic
connecting $x$ to $y$ (which is, thanks to the strict convexity of the norm $||\cdot||$, the straight line segment between \,$x$\, and \,$y$, and we take \,$\omega_{x,y}(t)=(1-t)x+ty$). 
Notice in particular that one can write, when the norm $||\cdot||$ is the Euclidean norm $|\cdot|$,
\begin{equation}\label{representationH1}
\sigma_\gamma(A)=\int_{\Omega \times \Omega} \mathcal H^1([x,y]\cap A)\,\mathrm{d}\gamma(x,y)\;\;\;\mbox{for every Borel set}\;A
\end{equation}
and similar formulas exist for other norms if one computes the $ \mathcal H^1$ measure with respect to the induced distance. 
This means that, for a subregion $A$, $\sigma_\gamma(A)$ stands for ``how much'' the transport takes place in $A$, if particles move from their origin $x$ to their destination $y$ on straight lines. We can also define a vectorial version of \,$\sigma_\gamma$, as a vector measure \,$w_\gamma$ on \,$\Omega$\, defined by
\begin{equation}\label{transport density def-v}
<w_\gamma,\xi>=\int_{\Omega \times \Omega}\mathrm{d}\gamma(x,y)\int_{0}^{1}\xi(\omega_{x,y}(t))\cdot {\omega}^\prime_{x,y}(t)\,\mathrm{d}t\;\;\; \mbox{for all}\;\;\xi\;\in\;C(\Omega,\R^d).
\end{equation}
One can show that this vector measure $w_\gamma$ solves the following problem (which is the so-called {\it continuous transportation model}\, proposed by Beckmann in \cite{0}):
\begin{equation} \label{Beckmann'}
\min\bigg\{ ||w||(\overline\Omega)\,:\,w \in \mathcal M^d(\overline\Omega),\,\nabla\cdot w=f\bigg\}.
\end{equation} 
Indeed, we can see easily that $w_\gamma$ is admissible in \eqref{Beckmann'} by using a gradient test function $\xi$. Moreover, we have $||w_\gamma|| \leq \sigma_\gamma,$ which implies $||w_\gamma||(\Omega) \leq \sigma_\gamma(\Omega) =\min\eqref{kant|x-y|}=\sup\eqref{dual} \leq \min\eqref{Beckmann'}$, where the last inequality follows from the fact that, if \,$\nabla \cdot w= f$\, and \,$||\nabla \phi||_{\star,\infty} \leq 1$, then we have $\int_\Omega \phi\,\mathrm{d}f = - \int_\Omega \nabla \phi \cdot \mathrm{d}w \leq ||w||(\Omega)$. This yields that \,$w_\gamma$ is an optimal flow for \eqref{Beckmann'}.

As we said, most of the analysis of the Beckmann problem has been performed so far in the Euclidean case, and we will summarize here the main achievements. In this case, it is well-known that every solution of the problem \eqref{Beckmann'} is of the form $w=w_\gamma$ for an optimal transport plan $\gamma$ for \eqref{kant|x-y|} (see, for instance, \cite[Chapter 4]{8}). As in general Problem \eqref{kant|x-y|} can admit several different solutions, also \eqref{Beckmann'} can have non-unique solutions. Yet, it is possible to prove that when either $f^+$ or $f^-$ are absolutely continuous measures, then all different optimal transport plans $\gamma$ induce the same vector measure $w_\gamma$. The following result also includes summability estimates:
\begin{proposition}\label{prop transp dens}
 Suppose that $||\cdot||$ is the Euclidean norm and $f^+\ll\lcal^d$. Then, the transport density \,$\sigma_\gamma$
 is unique (i.e., it does not depend on the choice of the optimal transport plan $\gamma$) and $\sigma_\gamma \ll \mathcal{L}^d$. If \,$f^+ \in L^p(\Omega)$ with $p<d/(d-1)$, then \,$\sigma_\gamma \in L^p(\Omega)$. Moreover, for arbitrary $p\in[1,\infty]$, if both $ f^+,\,f^- \in L^p(\Omega)$, then \,$\sigma_\gamma$\, also belongs to $L^p(\Omega)$.  
\end{proposition}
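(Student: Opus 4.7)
My plan is to derive all three statements from the single interpolation representation
\[
\sigma_\gamma \;=\; \int_0^1 \mu_t\,\dd t,\qquad \mu_t\;:=\;(T_t)_\#\bigl(|x-y|\,\gamma\bigr),\qquad T_t(x,y)\;:=\;(1-t)x+ty,
\]
which follows from \eqref{transport density def} by Fubini, combined with the pointwise estimate $\mu_t\le\diam(\Omega)\,\pi_t$ where $\pi_t:=(T_t)_\#\gamma$. The whole task then reduces to controlling $\pi_t$ in $L^p(\Omega)$ and integrating in $t$.

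For the bound on $\pi_t$, I would invoke the Euclidean theory for the $|x-y|$ cost (\cite{ambro,ambros,Caf,Evans,TW}): when $f^+\ll\lcal^d$, an optimal $\gamma$ is concentrated on the graph of a Borel map $T$, so that $\pi_t=(S_t)_\# f^+$ with $S_t(x):=(1-t)x+tT(x)$. Since transport rays for the strictly convex Euclidean norm are pairwise disjoint line segments (\cite{Caf,Cham,8}), $S_t$ is injective on $\spt f^+$ for every $t<1$; disintegrating $f^+$ along these rays reduces the Jacobian of $S_t$ to a product of $d$ elementary one-dimensional factors (one along the ray, from the 1D monotonic transport, and $d-1$ transverse ones, coming from the spreading or focusing of neighbouring rays between their endpoints), each bounded below by $(1-t)$ because the corresponding ``stretch factor'' of $T$ is non-negative. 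This yields the Jacobian estimate $J(S_t)\ge(1-t)^d$ and hence the change-of-variables bound
\[
\|\pi_t\|_{L^p(\Omega)}\;\le\;\frac{\|f^+\|_{L^p(\Omega)}}{(1-t)^{d/p'}},
\]
with $p'$ the conjugate exponent of $p$; in particular $\pi_t\ll\lcal^d$ and thus $\sigma_\gamma\ll\lcal^d$.

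The $L^p$ bounds now come by integration in $t$: the integral $\int_0^1(1-t)^{-d/p'}\,\dd t$ converges iff $d/p'<1$, that is $p<d/(d-1)$, which gives the first summability statement. For arbitrary $p\in[1,\infty]$ with $f^\pm\in L^p$, I would split $\int_0^1=\int_0^{1/2}+\int_{1/2}^1$ and apply the estimate above in terms of $f^+$ on $[0,1/2]$ (where $(1-t)^{-d/p'}$ is uniformly bounded) and in terms of $f^-$ on $[1/2,1]$ via the time-reversal $t\mapsto 1-t$, which exchanges the roles of the two marginals. Uniqueness of $\sigma_\gamma$ is a by-product of the same analysis, since the ray decomposition and the 1D transport on each ray are determined by $(f^+,f^-)$ alone, so $\pi_t$, $\mu_t$, and hence $\sigma_\gamma$ do not depend on the specific choice of optimal $\gamma$. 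The delicate point is the lower bound $J(S_t)\ge(1-t)^d$, since $T$ need not be classically differentiable for the $|x-y|$ cost; this obstacle is circumvented by the ray-by-ray reduction to the one-dimensional case, where the corresponding estimate is essentially trivial, following the strategy of \cite[Chap.~4]{8}.
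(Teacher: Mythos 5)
The paper itself does not prove this proposition; it simply defers to \cite{5,6,3,7} and \cite[Chapter 4]{8}, so your sketch has to be measured against the arguments in those references, which it follows in outline (displacement interpolation $\sigma_\gamma=\int_0^1\mu_t\,\dd t$, a Jacobian bound of order $(1-t)^d$, integration in $t$, and the symmetric splitting $\int_0^{1/2}+\int_{1/2}^1$ for general $p$). The exponents are right: $\|\pi_t\|_{L^p}\le(1-t)^{-d/p'}\|f^+\|_{L^p}$ integrates in $t$ exactly when $p<d/(d-1)$.

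There is, however, a genuine gap at the central step. The bound $J(S_t)\ge(1-t)^d$, and with it the $L^p$ estimate on $\pi_t$, is \emph{false for an arbitrary optimal map} $T$. For the cost $|x-y|$ the one-dimensional transport along a single ray is highly non-unique: already for $f^+=\ind_{[0,1]}$, $f^-=\ind_{[2,3]}$ in $d=1$, both $T(x)=x+2$ and $T(x)=3-x$ are optimal maps, and for the second one $S_{1/2}\equiv 3/2$, so $\mu_{1/2}$ is a Dirac mass and the longitudinal stretch factor $|1+t(T'-1)|=|1-2t|$ drops below $(1-t)$. Your phrase ``the corresponding stretch factor of $T$ is non-negative'' is precisely the monotonicity you need, but it is a property of a \emph{selected} optimal map (the one monotone along each ray, or equivalently the one obtained as a limit of the atomic constructions), not of every optimal map induced by a Borel $T$. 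The standard repair, which is what \cite{7} and \cite[Chapter 4]{8} actually do, is to prove the estimate either for atomic $f^-$ (where $S_t$ is an exact homothety of ratio $1-t$ on each cone) and pass to the limit using lower semicontinuity of the $L^p$ norm, or for the ray-monotone selection, and \emph{then} invoke the uniqueness of $\sigma_\gamma$ (Feldman--McCann \cite{6}) to transfer the bound to every optimal plan. This also invalidates your closing uniqueness argument: $\pi_t$ and $\mu_t$ genuinely depend on the choice of optimal $\gamma$ (the example above gives two different $\mu_{1/2}$'s); only their time integral $\sigma_\gamma$ is plan-independent, and proving \emph{that} requires the ray decomposition together with the uniqueness of the one-dimensional transport \emph{density} (not of the one-dimensional plan) on each ray. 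So uniqueness is an input to the summability proof here, not a by-product of it.
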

\noindent These properties are well-known in the literature, and we refer to \cite{5}, \cite{6},  \cite{7}, \cite{3} and \cite{7}, as well as to \cite[Chapter 4]{8}.

 We want now to prove that, even in the case of a general strictly convex norm $||\cdot||$, any optimal flow for \eqref{Beckmann'} comes from an optimal transport plan for \eqref{kant|x-y|}. First of all, we will introduce some objects that generalize both \,$\sigma_\gamma$\, and \,$w_\gamma$.
Let \,$\mathcal{C}$\, be the set of absolutely continuous curves \,$\omega: [0,1] \mapsto \Omega$. We call $\textit{traffic plan}$ any positive measure \,$Q$\, on \,$\mathcal{C}$\, with total mass equal to\,$f^+(\Omega)=f^-(\Omega)$, and
$$ \int_{\mathcal{C}} L(\omega)\,\mathrm{d}Q(\omega) < +\infty,$$ \\
where \,$L(\omega)$\, is the length of the curve \,$\omega$, i.e. $L(\omega)=\int_0^1 ||\omega^\prime(t)||\,\mathrm{d}t$ (note that the length is measured according to the norm $||\cdot||$). We define the $\textit{traffic intensity}$\, $i_Q \in \mathcal{M}^+(\Omega)$ as follows
$$ \int_{\Omega} \phi \,\mathrm{d} i_Q = \int_{\mathcal{C}} \bigg(\int_0^1 \phi(\omega(t)) ||\omega^\prime(t)||\,\mathrm{d}t \bigg)\,\mathrm{d}Q(\omega) \,\,\,\,\,\,\mbox{for all}\,\,\,\phi \in C(\Omega).$$
\noindent This definition (which is taken from \cite{CarJimSan} and adapted to the case of a general norm) is a generalization of the notion of transport density $\sigma_\gamma$ \eqref{transport density def}. The interpretation is - again - the following: for a subregion $A$, $i_Q(A)$ represents the total cumulated traffic in $A$ induced by $Q$, i.e., for every path we compute ``how long" it stays in $A$, and then we average on paths. We also associate a vector measure \,$w_Q$\, (called $\textit{traffic flow}$) with any traffic plan \,$Q$\, via\\
$$\int_{\Omega} \xi \cdot \mathrm{d} w_Q = \int_{\mathcal{C}} \bigg(\int_0^1 \xi(\omega(t)) \cdot \omega^\prime(t)\,\mathrm{d}t \bigg)\,\mathrm{d}Q(\omega)\,\,\,\,\,\mbox{for all}\,\,\,\,\xi \in C(\Omega,\mathbb{R}^d).$$  
\\
Taking a gradient field \,$\xi=\nabla \phi$\, in the previous definition yields\\
\begin{equation*}
 \int_{\Omega} \nabla \phi \cdot \mathrm{d} w_Q = \int_{\mathcal{C}} \big( \phi(\omega(1)) - \phi(\omega(0)) \big)\,\mathrm{d}Q(\omega)=\int_{\Omega} \phi \,\mathrm{d}((e_1)_{\#} Q - (e_0)_{\#} Q),
 \end{equation*}\\
where $e_t$ is the evaluation map at time $t$, i.e. $e_t(\omega):=\omega(t)$, for all \,$\omega \in \mathcal{C}$, $t \in [0,1]$. From now on, we will restrict our attention to admissible traffic plans \,$Q$, i.e. traffic plans such that $(e_0)_{\#} Q = f^+$\, and \,$(e_1)_{\#} Q = f^-$, since, in this case, $w_Q$ will be an admissible flow in \eqref{Beckmann'}, i.e. one has
$$\nabla \cdot w_Q=f^+ - f^-.$$

\begin{lemma} \label{lemma flow decomposite}
Let \,$w$ be a flow such that \,$\nabla \cdot w = f^+ - f^-$. Then, there is an admissible traffic plan \,$Q$\, such that \,$||w-w_Q||(\Omega) +  i_Q(\Omega)= ||w||(\Omega)$.
\end{lemma}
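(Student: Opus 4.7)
The approach is to invoke a Smirnov-type superposition principle for vector measures. The key fact, in the form given e.g. by \cite[Theorem 4.10 and the subsequent ``Smirnov decomposition'' discussion]{8}, asserts that any $w \in \mathcal M^d(\overline\Omega)$ with $\nabla\cdot w = f^+ - f^-$ admits a decomposition
\[ w = w_Q + w^c, \]
where $Q$ is an admissible traffic plan (so $(e_0)_\# Q = f^+$ and $(e_1)_\# Q = f^-$), $w^c$ is a divergence-free vector measure arising from a ``loop'' measure on closed curves, and the decomposition is \emph{mass-additive} in the sense that both
\[ \|w\|(\Omega) = \|w_Q\|(\Omega) + \|w^c\|(\Omega) \qquad\text{and}\qquad \|w_Q\|(\Omega) = i_Q(\Omega) \]
hold. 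Granted these two identities, setting $w^c := w - w_Q$ and chaining them yields
\[ \|w - w_Q\|(\Omega) + i_Q(\Omega) = \|w^c\|(\Omega) + \|w_Q\|(\Omega) = \|w\|(\Omega), \]
which is exactly the claimed equality. So the task reduces to establishing the two mass-additivity identities.

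A concrete route to implement this is to first approximate $w$ by polyhedral vector flows on finite oriented graphs (by standard density: mollify and then discretize on a dyadic lattice). For a polyhedral flow the decomposition is combinatorial: iteratively extract a directed simple path from a source atom to a sink atom carrying the maximal flow compatible with the remaining edge capacities; by construction each extraction is mass-additive with respect to the residual flow. When no source-to-sink paths remain, what is left is a pure circulation, to be encoded as $w^c$, and the extracted paths, weighted by their flows, yield $Q$. One then passes to the limit using tightness of traffic plans (guaranteed by the uniform bound $\int L(\omega)\,\mathrm{d}Q(\omega) \leq \|w\|(\Omega)$), weak-$*$ compactness, and lower semicontinuity of the total variation. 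The subtle point here is that lower semicontinuity only gives ``$\leq$'' in the two identities; the reverse inequality must be obtained from the triangle bounds $\|w\|(\Omega) \leq \|w_Q\|(\Omega) + \|w^c\|(\Omega)$ and $\|w_Q\|(\Omega) \leq i_Q(\Omega)$, which hold \emph{for any} traffic plan, so that the Smirnov limit saturates them simultaneously.

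A final point is the extension from the Euclidean setting, in which Smirnov's theorem is classically stated, to our general strictly convex norm $\|\cdot\|$. This extension comes essentially for free, as both identities are geometric in nature: $\|v_1 + v_2\| = \|v_1\| + \|v_2\|$ holds for positively parallel vectors $v_1, v_2$ in \emph{every} norm, so that if the Radon--Nikodym densities of $w_Q$ and $w^c$ are positively parallel with respect to a common dominating measure (the Euclidean mass-additivity property), then $\|w_Q + w^c\| = \|w_Q\| + \|w^c\|$ automatically also in $\|\cdot\|$. Likewise, the non-backtracking identity $\|w_Q\|(\Omega) = i_Q(\Omega)$ is a statement about the geometry of the curves in $\spt Q$ (each point of each curve is traversed in a single direction), which is insensitive to the choice of equivalent norm. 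Hence the traffic plan $Q$ furnished by Smirnov's construction for the Euclidean norm also witnesses the required equality for $\|\cdot\|$, completing the proof.
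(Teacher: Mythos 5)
Your proposal is correct in its overall logic and, at the structural level, follows the same skeleton as the paper's proof: produce an exact decomposition for a sequence of approximations, pass to the limit using tightness and semicontinuity to obtain $\|w-w_Q\|(\Omega)+i_Q(\Omega)\le \|w\|(\Omega)$, and close the sandwich with the universally valid bounds $\|w\|(\Omega)\le \|w-w_Q\|(\Omega)+\|w_Q\|(\Omega)\le\|w-w_Q\|(\Omega)+i_Q(\Omega)$. The difference lies in the approximation: the paper mollifies $w$ and $f^\pm$ (preserving the divergence constraint and making the densities strictly positive) and applies the Dacorogna--Moser flow construction, which in the smooth positive case yields a traffic plan $Q_\varepsilon$ with $w_{Q_\varepsilon}=w_\varepsilon$ and $i_{Q_\varepsilon}=\|w_\varepsilon\|$ exactly --- there is no cycle part at the approximate level, and the defect $w-w_Q$ only appears in the limit. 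Your norm-transfer observation (positively parallel Radon--Nikodym densities give mass-additivity in every norm, and absence of cancellation along curves is norm-independent) is correct and is a clean way to upgrade the Euclidean Smirnov statement to the anisotropic one; the paper instead simply reruns the Euclidean argument with $\|\cdot\|$ throughout, which works because neither the Dacorogna--Moser construction nor the semicontinuity step (Proposition 4.7 of \cite{8}) uses the Euclidean structure in an essential way.

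The one step I would not let stand as written is the polyhedral approximation ``mollify and then discretize on a dyadic lattice.'' The sandwich argument requires $\limsup_n\|w_n\|(\Omega)\le\|w\|(\Omega)$ (convergence of masses, not merely weak-$*$ convergence of the flows), and projecting a flow onto the edges of a fixed lattice does not deliver this: a constant diagonal field on a cubic grid acquires the $\ell^1$-type mass $|w_1|+\cdots+|w_d|$ rather than its Euclidean mass, so the discretized masses converge to a strictly larger, crystalline total variation and the final estimate degrades to $\|w-w_Q\|(\Omega)+i_Q(\Omega)\le C\,\|w\|(\Omega)$ with $C>1$. This can be repaired (finer and finer direction sets, or triangulations adapted to the field), but it is precisely the difficulty that the Dacorogna--Moser route avoids, since the ODE flow reproduces $w_\varepsilon$ with its exact mass. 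If instead you rely purely on the citation of the Smirnov-type statement in \cite[Section 4.2.3]{8}, the argument is sound, but be aware that the cited result is essentially the Euclidean case of the present lemma, so the citation is carrying most of the weight.
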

\begin{proof}
The result is just a variant of what is presented in \cite[Section 4.2.3]{8}, and the proof will also follow the same lines. Following \cite[Section 4.2.3]{8}, we first assume the case where \,$f^+,\,f^-$ and \,$w$ are smooth with $f^+,f^- >0$, and obtain existence of an admissible traffic plan \,$Q$\, with \,$w_Q=w$\, and \,$i_Q=||w||$\, via a Dacorogna-Moser construction.

For the general case: following again \cite[Theorem 4.10]{8}, convolve $w$ (resp. $f^+$ and $f^-$) with a Gaussian kernel $\eta_\varepsilon$ and take care of the boundary behavior (for more details, see \cite[Lemma 4.8]{8}); we obtain smooth vector fields \,$w_{\varepsilon}$ and strictly positive smooth densities \,$f^\pm_{\varepsilon}$\,\, with \,$\nabla \cdot w_{\varepsilon}=f^+_{\varepsilon} - f^-_{\varepsilon}$\, such that \,$w_\varepsilon \deb w$\, (but also $||w_\varepsilon|| \deb ||w||$ because of standard properties of convolutions) and \,$f^\pm_\varepsilon \deb f^\pm$.
Let $(Q_\varepsilon)_{\varepsilon}$ be the sequence of traffic plans such that, for every $\varepsilon >0$, $w_{Q_\varepsilon}=w_\varepsilon$\, and \,$i_{Q_\varepsilon}=||w_\varepsilon||$. The measures \,$Q_\varepsilon$\, were constructed so that $(e_0)_{\#} Q_\varepsilon = f^+_\varepsilon$ and $(e_1)_{\#} Q_\varepsilon = f^-_\varepsilon$, which implies, at the limit, that \,$Q_{\varepsilon} \deb Q$\, (since $\int_{\mathcal{C}} L(\omega)\,\mathrm{d}Q_\varepsilon(\omega)=||w_\varepsilon||(\Omega) \leq C$ and so $Q_\varepsilon$ is tight) with $(e_0)_{\#}Q=f^+$ and $(e_1)_{\#}Q=f^-$. Moreover, it is not difficult to check that Proposition 4.7 in \cite{8} is still true if we replace the Euclidean norm $|\cdot|$ by an arbitrary norm $||\cdot||$. In particular, we have
\begin{eqnarray*}
 \int_{\Omega} \xi \cdot \mathrm{d}w &=& \lim_{\varepsilon} \int_{\Omega} \xi \cdot \mathrm{d}w_\varepsilon = \lim_\varepsilon  \int_{\mathcal{C}} \bigg(\int_0^1 \xi(\omega(t)) \cdot \omega^\prime(t)\,\mathrm{d}t \bigg)\,\mathrm{d}Q_\varepsilon(\omega) \\
 &\geq &   \int_{\Omega} \xi \cdot \mathrm{d}w_Q \,\,+\,\, ||\xi||_{\star,\infty} \,\,(i_Q(\Omega) - ||w||(\Omega)),
 \end{eqnarray*}
for all $\xi \in C(\Omega,\mathbb{R}^d)$. Hence, $||w-w_Q||(\Omega) + i_Q(\Omega) \leq ||w||(\Omega)$. Yet, the other inequality is always true since $||w_Q|| \leq i_Q$. Then, we get that $||w-w_Q||(\Omega)  + i_Q(\Omega)  = ||w||(\Omega)$. 
\end{proof}
\begin{proposition} \label{optimal flow comes from an optimal plan with general norm}
Let $w$ be an optimal flow for \eqref{Beckmann'}, then there is an optimal transport plan \,$\gamma$\, for \eqref{kant|x-y|} such that \,$w=w_\gamma$.
\end{proposition}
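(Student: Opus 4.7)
The plan is to apply Lemma \ref{lemma flow decomposite} to the optimal $w$, obtaining an admissible traffic plan $Q$ with $\|w-w_Q\|(\Omega)+i_Q(\Omega)=\|w\|(\Omega)$, and then to show that $Q$ is supported on reparametrized segments joining its endpoints. The optimal transport plan will then be $\gamma:=(e_0,e_1)_\# Q$.

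First I would exploit the optimality of $w$ to force the residual $w-w_Q$ to vanish. Since $w_Q$ is also admissible for \eqref{Beckmann'} and $\|w_Q\|\le i_Q$, we obtain
\[
\|w\|(\Omega)\;\le\;\|w_Q\|(\Omega)\;\le\;i_Q(\Omega)\;=\;\|w\|(\Omega)-\|w-w_Q\|(\Omega)\;\le\;\|w\|(\Omega),
\]
which forces all the inequalities to be equalities; in particular $w=w_Q$ and $\|w_Q\|(\Omega)=i_Q(\Omega)$.

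Next I would use the Kantorovich potential. By duality, a maximizer $\phi$ of \eqref{dual} satisfies $\int_\Omega \phi\,\mathrm{d}(f^+-f^-)=\min\eqref{Beckmann'}=\|w\|(\Omega)$, and $\|\nabla\phi\|_{\star,\infty}\le 1$ means $\phi$ is $1$-Lip for $\|\cdot\|$. Since $Q$ is admissible, integrating along curves gives
\[
\|w\|(\Omega)=\int_{\mathcal C}\bigl(\phi(\omega(1))-\phi(\omega(0))\bigr)\,\mathrm{d}Q(\omega)\le\int_{\mathcal C}\|\omega(1)-\omega(0)\|\,\mathrm{d}Q(\omega)\le\int_{\mathcal C}L(\omega)\,\mathrm{d}Q(\omega)=i_Q(\Omega)=\|w\|(\Omega).
\]
All inequalities are equalities, so $Q$-a.e.\ curve $\omega$ satisfies $\|\omega(1)-\omega(0)\|=L(\omega)$; by the equality case in the triangle inequality $\|\int_0^1\omega'\,\mathrm{d}t\|\le\int_0^1\|\omega'\|\,\mathrm{d}t$ applied to a strictly convex norm, $\omega'(t)$ must be a nonnegative scalar multiple of the fixed direction $\omega(1)-\omega(0)$ a.e., so $\omega$ is a monotone reparametrization of the segment $[\omega(0),\omega(1)]$.

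Finally, set $\gamma:=(e_0,e_1)_\# Q\in\Pi(f^+,f^-)$. Its cost is
\[
\int_{\Omega\times\Omega}\|x-y\|\,\mathrm{d}\gamma=\int_{\mathcal C}\|\omega(1)-\omega(0)\|\,\mathrm{d}Q(\omega)=i_Q(\Omega)=\|w\|(\Omega)=\min\eqref{kant|x-y|},
\]
so $\gamma$ is optimal. To identify $w=w_Q$ with $w_\gamma$, I would reparametrize each $Q$-a.e.\ curve by the natural affine parameter $s$ with $\omega(t)=(1-s)\omega(0)+s\omega(1)$: for $\xi\in C(\Omega,\mathbb R^d)$,
\[
\int_\Omega\xi\cdot\mathrm{d}w_Q=\int_{\mathcal C}\mathrm{d}Q(\omega)\int_0^1\xi(\omega(t))\cdot\omega'(t)\,\mathrm{d}t=\int_{\Omega\times\Omega}\mathrm{d}\gamma(x,y)\int_0^1\xi((1-s)x+sy)\cdot(y-x)\,\mathrm{d}s=\int_\Omega\xi\cdot\mathrm{d}w_\gamma,
\]
giving $w=w_\gamma$. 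The main obstacle I expect is the rigorous verification that equality in $\|\int\omega'\|\le\int\|\omega'\|$ for a strictly convex norm does force $\omega$ to be a monotone reparametrization of a straight segment; this requires inspecting the equality case of the triangle inequality for the vector integral and using strict convexity of the dual ball to conclude that the directions of $\omega'(t)$ are constant a.e.
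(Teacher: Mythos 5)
Your proposal is correct and follows essentially the same route as the paper: apply Lemma \ref{lemma flow decomposite}, use optimality of $w$ together with $\|w_Q\|\le i_Q$ to force $w=w_Q$ and $i_Q(\Omega)=\|w\|(\Omega)$, then squeeze to conclude that $Q$ is concentrated on (reparametrized) segments and that $\gamma=(e_0,e_1)_\#Q$ is optimal; the paper merely states the strict-convexity rigidity step that you spell out. Two small points: your Kantorovich-potential chain has a sign slip (since $(e_0)_\#Q=f^+$ and $(e_1)_\#Q=f^-$, the integrand should be $\phi(\omega(0))-\phi(\omega(1))$), and this detour is avoidable --- the paper instead uses the already established identity $\min\eqref{Beckmann'}=\min\eqref{kant|x-y|}$ to close the chain $\|w\|(\Omega)=i_Q(\Omega)=\int L\,\dd Q\ge\int\|x-y\|\,\dd\gamma\ge\min\eqref{kant|x-y|}$ directly; also, the rigidity in $\bigl\lVert\int\omega'\bigr\rVert=\int\|\omega'\|$ uses strict convexity of the unit ball of $\|\cdot\|$ itself (no line segments on its sphere), not of the dual ball.
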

\begin{proof} From Lemma \ref{lemma flow decomposite}, there is an admissible traffic plan $Q$ such that $||w-w_Q||(\Omega)  + i_Q(\Omega) = ||w||(\Omega)$. The optimality of the flow $w$ and the fact that $||w_Q|| \leq i_Q$ imply that $w_Q=w$\, and $i_Q=||w||$. Hence,\\
$$
||w||(\Omega)
\!=\! i_{Q}(\Omega)
\!=\! \int_{\mathcal{C}} L(\omega)\,\mathrm{d}Q(\omega)
 \geq\!  \int_{\mathcal{C}} \!||\omega(0) \!-\! \omega(1)||\mathrm{d}Q (\omega) \!=\! \int_{\Omega \times \Omega}\!\!\! ||x-y||\,\mathrm{d}(e_0,e_1)_{\#}Q
  \geq \! \min\eqref{kant|x-y|}.$$\\
Yet, the equalities \,$||w||(\Omega)=\min\eqref{Beckmann'}=\min\eqref{kant|x-y|}$\, imply that the above inequalities are in fact equalities. This means that \,$Q$\, must be concentrated on segments (thanks to the strict convexity of the norm $|| \cdot ||$). Also, the measure $\gamma=(e_0,e_1)_{\#}Q$, which belongs to $\Pi(f^+,f^-)$, must be optimal in \eqref{kant|x-y|} and, we have \,$w=w_Q=w_{\gamma}$.\end{proof} 
We also introduce an easy stability result that we will need later on.
\begin{proposition}\label{stabsigma}
Suppose \,$f^+ \in \mathcal{M}^+(\Omega)$ is fixed and \,$f^-_n\deb f^-$. Let \,$\gamma_n$\, be an optimal transport plan between \,$f^+$ and $f^-_n$. Then, up to a subsequence, $\gamma_n\deb \gamma$, where \,$\gamma$\, is an optimal transport plan between \,$f^+$\, and \,$f^-$. Moreover,  if all the plans $\gamma_n$ are induced by transport maps $T_n$ and $\gamma$ is induced by a map $T$, then we have $T_n\to T$ in $L^2(f^+)$.
\end{proposition}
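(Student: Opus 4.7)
The plan is to prove this in three steps: extracting a weakly convergent subsequence of $\gamma_n$, identifying its limit as an optimal transport plan, and then upgrading weak convergence of the plans to strong $L^2$ convergence of the induced maps when they exist.

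First, since $\Omega$ is compact, the measures $\gamma_n$ all live on the compact space $\Omega \times \Omega$ and have total mass equal to $f^+(\Omega)$, so the sequence is automatically tight. Prokhorov gives a subsequence (still denoted $\gamma_n$) with $\gamma_n \deb \gamma$. Continuity of push-forwards under narrow convergence of measures (against the continuous projection maps) yields $(\Pi_x)_\# \gamma = f^+$ and $(\Pi_y)_\# \gamma = f^-$, so $\gamma \in \Pi(f^+,f^-)$.

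Next, to see that $\gamma$ is optimal I would use duality. Let $\phi_n$ be a Kantorovich potential associated to $(f^+,f^-_n)$; normalize it so that $\phi_n(x_0)=0$ at some fixed point $x_0 \in \Omega$. Since $\phi_n$ is $1$-Lipschitz with respect to $||\cdot||$, by Arzel\`a--Ascoli a subsequence converges uniformly to some $1$-Lipschitz function $\phi$. Then, using uniform convergence of $\phi_n$ and narrow convergence of $f^-_n$, I obtain
\[
\int_{\Omega\times\Omega} \|x-y\|\,\dd\gamma = \lim_n \int_{\Omega\times\Omega} \|x-y\|\,\dd\gamma_n = \lim_n \int_\Omega \phi_n\,\dd(f^+-f^-_n) = \int_\Omega \phi\,\dd(f^+-f^-),
\]
and since $\phi$ is admissible in the dual problem for $(f^+,f^-)$, this chain of equalities shows both that $\gamma$ is optimal and that $\phi$ is a Kantorovich potential for $(f^+,f^-)$.

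The main obstacle is the final statement about $L^2$ convergence of the maps, because $T$ is in general only measurable. The plan is to exploit the identity
\[
\int_\Omega |T_n(x)-T(x)|^2\,\dd f^+(x) = \int_{\Omega\times\Omega} |y-T(x)|^2\,\dd\gamma_n(x,y).
\]
If $T$ were continuous the integrand would be continuous and bounded on the compact $\Omega\times\Omega$, so narrow convergence $\gamma_n\deb\gamma$ would directly yield the limit $\int |y-T(x)|^2\,\dd\gamma = \int|T(x)-T(x)|^2\,\dd f^+(x)=0$. For the measurable case, I would use Lusin's theorem: for every $\ve>0$ pick a continuous $\tilde T_\ve:\Omega\to\R^d$ with $f^+(\{T\neq\tilde T_\ve\})<\ve$ and $\|\tilde T_\ve\|_\infty\leq \diam(\Omega)$, then split
\[
\int |y-T(x)|^2\,\dd\gamma_n = \int |y-\tilde T_\ve(x)|^2\,\dd\gamma_n + \int \bigl(|y-T(x)|^2 - |y-\tilde T_\ve(x)|^2\bigr)\dd\gamma_n.
\]
The first piece passes to the limit by continuity and is bounded by $\diam(\Omega)^2 \ve$; the second piece is supported (in $x$) on the set $\{T\neq\tilde T_\ve\}$ of $f^+$-measure less than $\ve$, and its integrand is controlled pointwise by $2\diam(\Omega)^2$, so its absolute value is at most $2\diam(\Omega)^2\ve$. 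Since $\ve$ is arbitrary, $\limsup_n \int|T_n-T|^2\,\dd f^+=0$, which gives the desired strong convergence in $L^2(f^+)$.
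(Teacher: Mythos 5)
Your proof is correct, and the first two parts (tightness/Prokhorov for the plans, then Arzel\`a--Ascoli on the normalized Kantorovich potentials and passing to the limit in the duality identity to certify optimality of $\gamma$) coincide with the paper's argument. Where you genuinely diverge is the last step. The paper never touches the measurability issue for $T$: it tests the narrow convergence $\gamma_n\deb\gamma$ against the two \emph{continuous} functions $(x,y)\mapsto \xi(x)\cdot y$ and $(x,y)\mapsto |y|^2$, obtaining respectively weak convergence $T_n\deb T$ in $L^2(f^+)$ and convergence of the $L^2(f^+)$ norms, and then invokes the Hilbert-space fact that weak convergence plus norm convergence implies strong convergence. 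You instead work directly with $\int|T_n-T|^2\,\dd f^+=\int|y-T(x)|^2\,\dd\gamma_n$, whose integrand is only Borel, and repair the lack of continuity with a Lusin approximation $\tilde T_\ve$ of $T$; the key observation that makes this work is that the exceptional set $\{T\neq\tilde T_\ve\}\times\Omega$ has $\gamma_n$-measure below $\ve$ \emph{uniformly in $n$}, precisely because the first marginal $f^+$ is fixed. Both routes are valid and of comparable length; the paper's is slightly slicker in that it sidesteps Lusin entirely, while yours is more self-contained (no appeal to the weak-plus-norm criterion) and makes explicit where the hypothesis that $f^+$ is fixed enters. The only cosmetic points in your write-up are the constants: you should make sure the Tietze extension in Lusin's theorem is chosen with values in a fixed bounded set (e.g.\ by composing with a retraction onto $\Omega$), so that $|y-\tilde T_\ve(x)|$ is uniformly bounded; the resulting bounds are then $C\ve$ with $C$ depending only on $\Omega$, rather than exactly $\diam(\Omega)^2\ve$, which does not affect the conclusion.
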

\begin{proof}
Firstly, we see easily that, up to a subsequence, $\gamma_n$ admits a weak limit $ \gamma$ in the sense of measures. The condition $\gamma_n \in \Pi(f^+,f^-_n)$ passes to the limit, thus giving $\gamma \in \Pi(f^+,f^-)$. Moreover, for each $n$, there is a corresponding Kantorovich potential \,$\phi_n$, which is 1-Lip according to $||\cdot||$, such that
$$\int_\Omega \phi_n \,\mathrm{d}(f^+ - f_n^-)=\int_{\Omega \times \Omega}||x-y||\,\mathrm{d}\gamma_n.$$
Up to a subsequence, we can suppose $\phi_n \rightarrow \phi$ uniformly in $\Omega$, where $\phi$ is also a 1-Lip function with respect to $||\cdot||$. Then, passing to the limit in the above equality, we get\\
$$\int_\Omega \phi \,\mathrm{d}(f^+ - f^-)=\int_{\Omega \times \Omega}||x-y||\,\mathrm{d}\gamma,$$
which is sufficient to infer that $\gamma$\, is actually an optimal transport plan between \,$f^+$ and \,$f^-$, and \,$\phi$\, is the corresponding Kantorovich potential. 

The last part of the statement, when plans are induced by maps, can be deduced by the weak convergence of the plans. Using \,$\gamma_n=(id,T_n)_\# f^+$\, and \,$\gamma_n\deb \gamma:=(id,T)_\# f^+$ and testing the weak convergence against the test function \,$\phi(x,y)=\xi(x)\cdot y$\, we obtain 
$$\int \xi(x)\cdot T_n(x)\,\dd f^+(x)\to \int \xi(x)\cdot T(x)\,\dd f^+(x),$$ 
which means that we have the weak convergence $T_n\deb T$ in $L^2(f^+)$. We can now test against $\phi(x,y)=|y|^2$ and obtain
$$\int |T_n(x)|^2\,\dd f^+(x)\to \int |T(x)|^2\,\dd f^+(x),$$\\
which proves the convergence of the $L^2$ norm. This gives strong convergence in $L^2(f^+)$.    
\end{proof}

 We have now to consider the case where the measures \,$f^+$ and \,$f^-$ are concentrated on the boundary.  As we said, the theory of existence of optimal maps, even for general norms, is now well-developed, but requires at least $f^+$ to be absolutely continuous. Hence, it  cannot be applied here. Moreover, uniqueness of the optimal map is in general not guaranteed. 

Surprisingly, the case of measures concentrated on the boundary of a strictly convex domain turns out to be easier. We can indeed prove in some cases that any optimal $\gamma$ in this case is induced by a transport map, which also implies uniqueness of $\gamma$ and of $\sigma_\gamma$. We first start from the case \,$d=2$ (with a general strictly convex norm $||\cdot||$) which is easier to deal with.

From now on we will suppose the condition that $f^+$ and $f^-$ have no common mass, which means that there exist two disjoint sets $A^+$ and $A^-$ contained in $\partial\Omega$ with $f^\pm$ concentrated on $A^\pm$ (beware that these sets are not necessarily the two supports of $f^+$ and $f^-$).

\begin{proposition} \label{Unicity}
Suppose that \,$\Omega$\, is strictly convex, and \,$d=2$. Then, if \,$f^+$ is atomless (i.e., $f^+(\{x\})=0$\, for every \,$x \in \partial\Omega$) and $f^+$ and $f^-$ have no common mass,  there is a unique optimal transport plan \,$\gamma$\, for (\ref{kant|x-y|}), between \,$f^+$ and \,$f^-$, and it is induced by a map \,$T$.
\end{proposition}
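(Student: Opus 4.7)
The plan is to first show that every optimal plan $\gamma$ for \eqref{kant|x-y|} is induced by a transport map $T$, and then to deduce uniqueness; the argument rests on two geometric ingredients specific to $\mathbb{R}^2$, namely the non-crossing property of transport rays and the cyclic topology of $\partial\Omega$. For the first, I would fix an optimal $\gamma$ and recall the $c$-cyclical monotonicity of $\spt(\gamma)$ with $c(x,y)=\|x-y\|$. Combined with the strict convexity of $\|\cdot\|$, this implies that two segments $[x_1,y_1]$ and $[x_2,y_2]$ with $(x_i,y_i)\in\spt(\gamma)$ cannot share a common interior point $p$: the strict triangle inequality for strictly convex norms would then give $\|x_1-y_1\|+\|x_2-y_2\|=\|x_1-p\|+\|p-y_1\|+\|x_2-p\|+\|p-y_2\|>\|x_1-y_2\|+\|x_2-y_1\|$ (the non-positive-proportionality required for strict inequality being generic, or else handled by approximation), contradicting cyclical monotonicity.

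For the graph property, I use that $\Omega$ is strictly convex in dimension $2$, so $\partial\Omega$ is a simple closed curve. Upon cyclic parameterization, the non-crossing property reads: for any two pairs $(x_i,y_i)\in\spt(\gamma)$ with disjoint endpoints, the sets $\{x_1,y_1\}$ and $\{x_2,y_2\}$ are not cyclically interleaved on $\partial\Omega$. The aim is to show that for $f^+$-a.e.\ $x\in A^+$ the fiber $\Gamma_x:=\{y:(x,y)\in\spt(\gamma)\}$ is a singleton, which gives $\gamma=(\mathrm{id},T)_\# f^+$. I would proceed by contradiction: if $B:=\{x\in A^+:\#\Gamma_x\geq 2\}$ has $f^+(B)>0$, measurable selection from the closed set $\spt(\gamma)$ yields two Borel maps $T_1,T_2:B\to A^-$ with $T_1(x)\neq T_2(x)$. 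For each $x\in B$ the chords $[x,T_1(x)]$ and $[x,T_2(x)]$ partition $\partial\Omega\setminus\{x\}$ into three open arcs, and non-crossing forces every other chord in $\spt(\gamma)$ to lie in the closure of one of them. Cutting $\partial\Omega$ at a point $p_0$ with $(f^++f^-)(\{p_0\})=0$ (which exists since both measures are finite) reduces the problem to a $1$D non-crossing matching of atomless equal-mass measures on an interval, which via a last-in-first-out stack construction is uniquely determined and carried by a graph; in particular $f^+(B)=0$.

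Uniqueness then follows from a standard convexity argument: if $\gamma_1,\gamma_2$ are both optimal, the half-sum $\tfrac12(\gamma_1+\gamma_2)$ is also optimal with marginals $f^+,f^-$ (and $f^+$ is still atomless), hence by the previous step it is carried by a graph, which forces $\gamma_1=\gamma_2=(\mathrm{id},T)_\# f^+$. The main technical obstacle is the graph step: the bifurcation set $B$ could a priori form an uncountable laminar family of nested arcs, so the atomless hypothesis on $f^+$ must be used carefully, in conjunction with the non-crossing property (equivalently, with the uniqueness of $1$D non-crossing graph matchings with atomless source), to rule out a positive-$f^+$-measure set of bifurcations.
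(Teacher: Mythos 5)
Your overall architecture (non-crossing of the chords of $\spt\gamma$ via cyclical monotonicity and strict convexity of the norm; then ``non-crossing $\Rightarrow$ graph'' for atomless $f^+$; then uniqueness by taking half-sums of optimal plans) matches the paper's, and the first and last steps are fine. The gap is in the middle step, which is the heart of the proposition, and you have in effect flagged it yourself. The lemma you invoke to close it --- that after cutting $\partial\Omega$ at $p_0$ the non-crossing matching of the marginals is \emph{uniquely determined} by a LIFO stack construction --- is false: non-crossing matchings between given boundary marginals are in general not unique. For instance, if $f^+$ is uniform on two arcs $A_1^+,A_2^+$ and $f^-$ on two arcs $A_1^-,A_2^-$ placed in the cyclic order $A_1^+,A_1^-,A_2^+,A_2^-$, then both the ``adjacent'' matching ($A_i^+\to A_i^-$, suitably orientation-reversed) and the ``nested'' one ($A_1^+\to A_2^-$, $A_2^+\to A_1^-$) are non-crossing; only optimality for the chord-length cost selects one of them. (Incidentally, $f^-$ is not assumed atomless in the statement, so the reduction to ``atomless equal-mass measures'' is also not available.) What your argument actually requires is the weaker assertion that \emph{every} non-crossing plan with atomless first marginal and $A^+\cap A^-=\emptyset$ is carried by a graph; this is plausible, but it is precisely the point where, as you note, the bifurcation arcs form a laminar family that could a priori be uncountable, and your proposal stops short of ruling out a set of bifurcation points of positive $f^+$-measure. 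As written, the key step is reduced to an unproved (and, in its strong form, incorrect) combinatorial lemma.

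The paper closes this gap by a measure-theoretic counting argument rather than a combinatorial one: to each double point $x$ (a point from which two distinct transport rays $r_x^{\pm}$ emanate) it associates the region $\Delta_x\subset\Omega$ delimited by $r_x^+$, $r_x^-$ and $\partial\Omega$; strict convexity of $\Omega$ gives $|\Delta_x|>0$, the non-crossing of transport rays (strict convexity of the norm) is used to make the interiors of these regions essentially disjoint, hence the set $\mathcal D$ of double points is at most countable and $f^+(\mathcal D)=0$ by atomlessness. For $x\in A^+\setminus\mathcal D$ the unique ray from $x$ meets $\partial\Omega$, hence $A^-$, in at most one further point (a chord of a strictly convex domain touches the boundary only at its endpoints), which defines $T$. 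If you prefer to repair your own route, the hypothesis $A^+\cap A^-=\emptyset$ is the ingredient to exploit in the laminar structure: along a nested chain of bifurcation arcs $I_x=(y_1(x),y_2(x))$ the branch point of a deeper arc must lie in the closure of the larger arc but cannot coincide with its endpoints (which belong to $A^-$), so it sits in a jump of the monotone endpoint functions; since monotone functions have countably many jumps and an antichain of nondegenerate arcs is countable, the bifurcation set is countable and atomlessness of $f^+$ concludes. Either way, some substitute for the missing countability argument must be supplied.
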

\begin{proof}
Let \,$\gamma$\, be an optimal transport plan between $f^+$ and $f^-$.
Let $\mathcal{D}$ be the set of double points, that is those points whose belong to several transport rays.
Take $x \in \mathcal{D}$ and let $r_x^\pm$ be two different transport rays starting from $x$. Let $\Delta_x \subset \Omega$ be the region delimited by $r_x^+$, $r_x^-$ and $\partial\Omega$. As $\Omega$ is strictly convex, 
then we see easily that $|\Delta_x|>0$ and the interior parts of all these sets $\Delta_x$, $x \in \mathcal{D}$, are disjoint (thanks also to the strict convexity of the norm $||\cdot||$).
This implies that the set \,$\mathcal{D}$\, is at most countable and so $f^+(\mathcal{D})=0$\, as \,$f^+$ is atomless. On the other hand, for every $x \in A^+\setminus \mathcal{D}$ there is a unique transport ray $r_x$ starting from $x$, and this ray $r_x$ intersects $A^-$ in - at  most - one point, which will be denoted by $T(x)$. Hence, we get that \,$\gamma=(id,T)_{\#} f^+$, which is equivalent to saying that $\gamma$\, is, in fact, induced by a map $T$. The uniqueness follows in the usual way: if two plans $\gamma$\, and \,$\gamma^\prime$ optimize (\ref{kant|x-y|}), the same should be true for $(\gamma + \gamma^\prime)/2$. Yet, for this measure to be induced by a map, it is necessary to have \,$\gamma=\gamma^\prime$.   
\end{proof}

The higher-dimensional counterpart of the above result should replace the assumption that $f^+$ is atomless with the assumption that $f^+$ gives no mass to $(d-2)$-dimensional sets (i.e. sets of codimension 1 within the boundary). Yet, this seems more complicated to prove, and we will just stick to an easier result, in the case where $f^+$ is absolutely continuous w.r.t. to the $\mathcal H^{d-1}$ measure on $\partial\Omega$ (that we simply write $f^+\in L^1(\partial\Omega)$). Unfortunately, the easy proof that we provide here below only works when the norm $||\cdot||$ is the Euclidean norm $|\cdot|$. 

\begin{proposition} \label{Unicity-d>2}
Suppose that \,$\Omega$\, is strictly convex and \,$d\geq 2$. Then, if \,$f^+\in L^1(\partial\Omega)$ and $f^+$ and $f^-$ have no common mass, there is a unique optimal transport plan \,$\gamma$\, for (\ref{kant|x-y|}) with the Euclidean cost $|x-y|$, and it is induced by a map \,$T$.
\end{proposition}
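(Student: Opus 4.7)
My approach is to recover the transport direction at $\mathcal{H}^{d-1}$-a.e.\ point of $A^+$ from the first-order necessary condition satisfied by a Kantorovich potential restricted to $\partial\Omega$, exploiting the Euclidean formula for the gradient of $|\cdot - y|$. First I fix a Kantorovich potential $\phi$, a $1$-Lipschitz (with respect to $|\cdot|$) function satisfying $\phi(x)-\phi(y)=|x-y|$ on the support of every optimal $\gamma$, and define, for $x\in A^+$,
$$\Psi(x):=\{y\in A^- : \phi(x)-\phi(y)=|x-y|\},$$
so that any optimal $\gamma$ is concentrated on $\{(x,y): y\in\Psi(x)\}$. Since $\Omega$ is convex, $\partial\Omega$ is a compact Lipschitz $(d-1)$-manifold admitting, at $\mathcal{H}^{d-1}$-a.e.\ point $x$, a tangent hyperplane $T_x\partial\Omega$ and a unique inward normal $\nu(x)$; moreover $\phi|_{\partial\Omega}$ is Lipschitz, hence (by Rademacher in Lipschitz charts) tangentially differentiable $\mathcal{H}^{d-1}$-a.e.\ with $|\nabla_\tau\phi(x)|\leq 1$. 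Since $f^+$ and $f^-$ have no common mass, I take $A^+\cap A^-=\emptyset$, so $|\cdot-y|$ is smooth at $x$ for every $y\in\Psi(x)$.

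The heart of the argument is the following first-order computation. For $y\in\Psi(x)$, the function $F_y(z):=\phi(z)-|z-y|$ is $1$-Lipschitz and satisfies $F_y(z)\leq \phi(y)$ everywhere, with equality at $z=x$, so $x$ is a global maximum of $F_y|_{\partial\Omega}$. At a good point $x$, tangential differentiability and the first-order optimality condition on $T_x\partial\Omega$ give
$$\nabla_\tau\phi(x)\;=\;\bigl(\nabla(|\cdot-y|)(x)\bigr)_\tau\;=\;\left(\frac{x-y}{|x-y|}\right)_\tau.$$
Setting $v:=(y-x)/|y-x|$, this reads $v_\tau=-\nabla_\tau\phi(x)$, so the tangential component of the direction from $x$ to $y$ is determined by $x$ alone, independently of the choice of $y\in\Psi(x)$.

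The normal component of $v$ is then pinned down by strict convexity: since $y\neq x$ and $\Omega$ is strictly convex, the open chord $(x,y)$ lies in $\mathrm{int}\,\Omega$, so $v\cdot\nu(x)>0$; together with $|v|=1$ and $v_\tau$ fixed, this forces $v=v_\tau+\sqrt{1-|v_\tau|^2}\,\nu(x)$, where the borderline $|v_\tau|=1$ is ruled out because it would make $v$ tangent to $\partial\Omega$, contradicting $x+tv\in\overline\Omega$ for $t=|y-x|>0$ by strict convexity. Thus $v$ is uniquely determined by $x$, and then $T(x)\in A^-$ is the unique second intersection of the line $x+\R v$ with $\partial\Omega$ (again by strict convexity), giving $\Psi(x)=\{T(x)\}$. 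Since $f^+\ll\mathcal{H}^{d-1}|_{\partial\Omega}$, this holds at $f^+$-a.e.\ $x$, so any optimal $\gamma$ is concentrated on the graph of $T$ and must coincide with $(\id,T)_\# f^+$, yielding both existence of the map and uniqueness of $\gamma$.

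I expect the main technical point to lie in the rigorous implementation of the tangential differentiability step -- combining Rademacher's theorem in Lipschitz charts of $\partial\Omega$ with the $\mathcal{H}^{d-1}$-a.e.\ existence of a tangent hyperplane and a unique normal for a convex hypersurface -- so that the first-order condition on a Lipschitz manifold is genuinely applicable. This is also precisely where the Euclidean nature of the cost is essential: for a general strictly convex norm the gradient of $||\cdot-y||$ would be replaced by a subdifferential, which can be multi-valued along certain rays and would prevent the unique recovery of $v$ from $v_\tau$.
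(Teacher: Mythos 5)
Your proof is correct and is essentially the paper's own argument: the paper parametrizes $A^\pm$ by boundary charts and verifies the twist condition for $c(s^+,s^-)=|x(s^+)-y(s^-)|$, which amounts to exactly your first-order computation identifying $\nabla_\tau\phi(x)$ with the tangential projection of $(x-y)/|x-y|$, followed by the same two geometric observations (an inward-pointing unit vector is determined by its tangential component on a convex boundary, and strict convexity pins down the second intersection of the ray with $\partial\Omega$). The only difference is presentational -- you work directly with the potential on $\partial\Omega$ rather than invoking the standard twist-condition machinery in charts.
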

\begin{proof}
Let $\gamma$\, be an optimal transport plan between $f^+$ and $f^-$. According to the strategy above, it is enough to prove that for $f^+$-a.e. $x\in A^+$ there is at most a unique point $y\in A^-$ such that $(x,y)\in \spt\gamma$. We will parametrize $A^\pm$ via variables $s^\pm\in\R^{d-1}$. This is for sure possible since both $A^\pm$ do not fill the whole boundary $\partial\Omega$, and every proper subset of such a boundary is homeomorphic to a subset of $\R^{d-1}$, via an homeomorphism which can also be chosen to be locally bi-Lipschitz. Up to removing a negligible set, we can also assume that it is differentiable everywhere. Under this parameterization, we face a new transport problem in $\R^{d-1}$, with a new cost function $c(s^+,s^-):=|x(s^+)-y(s^-)|$, where $s^+\mapsto x(s^+)$ and $s^-\mapsto y(s^-)$ are the above parameterization of $A^+$ and $A^-$. 

Using standard arguments from optimal transport theory (see \cite[Chapter 1]{8}) one can see that the Kantorovich potentials in this new transport problem are locally Lipschitz continuous, and hence differentiable a.e. Thus it is enough to check that $c$ satisfies the twist condition to prove that $\gamma$ is necessarily induced by a map $T$, and that it is unique. Computing the gradient of $c$ w.r.t. the variable $s^+$ one gets
$$\nabla_{s^+} c(s^+,s^-)=\frac{x(s^+)-y(s^-)}{|x(s^+)-y(s^-)|}Dx(s^+),$$
where $Dx(s^+)$ is the Jacobian matrix of the diffeomorphism $x$. We need to prove that this expression is injective in $s^-$. Having two different values of $s^-$ (say, $s^-_0$ and $s^-_1$) where these expressions coincide means, using that  $s^+\mapsto x(s^+)$ is a diffeomorphism, that the two unit vectors $x(s^+)-y(s^-_i)/|x(s^+)-y(s^-_i)|$ have the same projection onto the tangent space to $\partial\Omega$ at $x(s^+)$ (note that, from $A^+\cap A^-=\emptyset$, we can assume $x(s^+)\neq y(s^-_i)$). Since they are unit vectors, and they both point to the interior of $\Omega$, which is convex, then they should fully coincide. But this means that the direction connecting $x(s^+)$ to the points $y(s^-_i)$ is the same, and since all these points lie on the boundary of a strictly convex domain, we have $y(s^-_0)=y(s^-_1)$. 
\end{proof}

\noindent For the sake of the next section, we also want stability results on the transport density. Suppose that $f^+$ and $f^-$ are fixed, and that a unique optimal transport plan $\gamma$ exists in the transportation from $f^+$ to $f^-$. In this case we will directly write \,$\sigma$\, instead of \,$\sigma_\gamma$, if no ambiguity arises. Given the optimal transport plan $\gamma$, let us define the measure \,$f_t$ via 
\begin{equation}\label{defft}
f_t=(\Pi_t)_{\#}(||x-y|| \cdot \gamma)
\end{equation}
where \,$\Pi_t(x,y):=(1-t)x + ty$. From (\ref{transport density def}), the transport density $\sigma$ may be easily written as
$$ \sigma=\int_0^1 f_t\,\mathrm{d}t.$$
We also define a sort of partial transport density that will be useful in the sequel: given $\tau\leq 1$, set
\begin{equation}\label{defsigmatau}
 \sigmatau=\int_0^\tau f_t\,\mathrm{d}t.
 \end{equation}
 Note that \,$\sigmatau$ really depends on $\gamma$, i.e., differently from $\sigma$, it is not in general true that different optimal plans $\gamma$ induce the same $\sigmatau$. On the other hand, we will only use this partial transport density in cases where the optimal $\gamma$ is unique. In this case we can also obtain:

\begin{proposition}\label{stabsigma-2}
Suppose \,$f^+ \in \mathcal{M}^+(\Omega)$ is fixed and \,$f^-_n\deb f^-$. Let \,$\gamma_n$\, be an optimal transport plan between \,$f^+$ and \,$f^-_n$ and suppose that there is a unique optimal transport plan between $f^+$ and $f^-$. Fix $\tau\leq 1$ and define $\sigmatau_n$ according to \eqref{defft} and \eqref{defsigmatau} using $\gamma_n$, and $\sigmatau$ using $\gamma$. Then, we have $\sigmatau_n\deb\sigmatau$.
\end{proposition}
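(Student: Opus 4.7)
The plan is to reduce the weak convergence of $\sigmatau_n$ to the (already established) subsequential weak convergence of the optimal transport plans $\gamma_n$ given by Proposition \ref{stabsigma}. First I would upgrade that subsequential convergence to convergence of the whole sequence: since the optimal plan between $f^+$ and $f^-$ is assumed unique, every subsequence of $(\gamma_n)$ admits (by Proposition \ref{stabsigma}) a further subsequence weakly converging to that same $\gamma$. As $\M^+(\Omega\times\Omega)$ endowed with weak convergence is metrizable and the masses $\gamma_n(\Omega\times\Omega)=f^+(\Omega)$ are uniformly bounded with $\Omega\times\Omega$ compact (so tightness is automatic), the standard Urysohn-type argument yields $\gamma_n\deb\gamma$ along the whole sequence.

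Then, for any test function $\phi\in C(\Omega)$, using the definitions \eqref{defft} and \eqref{defsigmatau} together with Fubini, I would rewrite
\begin{equation*}
\langle \sigmatau_n,\phi\rangle = \int_0^\tau\!\!\int_{\Omega\times\Omega}\phi((1-t)x+ty)\,||x-y||\,\dd\gamma_n(x,y)\,\dd t = \int_{\Omega\times\Omega} \Phi_{\phi,\tau}(x,y)\,\dd\gamma_n(x,y),
\end{equation*}
where
\begin{equation*}
\Phi_{\phi,\tau}(x,y):=||x-y||\int_0^\tau\phi((1-t)x+ty)\,\dd t.
\end{equation*}
Since $\phi$ is continuous and bounded on the compact set $\Omega$, a routine dominated-convergence argument shows that $\Phi_{\phi,\tau}$ is continuous and bounded on $\Omega\times\Omega$. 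The weak convergence $\gamma_n\deb\gamma$ thus gives $\int \Phi_{\phi,\tau}\,\dd\gamma_n \to \int \Phi_{\phi,\tau}\,\dd\gamma$, and unrolling Fubini in the other direction identifies the right-hand side with $\langle \sigmatau,\phi\rangle$. As $\phi\in C(\Omega)$ was arbitrary, this yields $\sigmatau_n\deb\sigmatau$.

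The main obstacle is not technical but conceptual: because $\sigmatau$ genuinely depends on the choice of optimal plan (and not only on the marginals $f^+,f^-$), the uniqueness hypothesis on the limiting $\gamma$ is essential both to make the statement well-posed and to upgrade Proposition \ref{stabsigma} from subsequential convergence to convergence of the whole sequence. Once this is in place, the rest of the argument is a clean application of Fubini together with the continuity of $(x,y)\mapsto ||x-y||$ and of $\phi$ on the compact product $\Omega\times\Omega$.
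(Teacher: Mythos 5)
Your proof is correct and follows exactly the route the paper intends: the paper's own proof is a one-line remark that the claim follows from Proposition \ref{stabsigma}, the continuity of $(x,y)\mapsto ||x-y||$, and the uniqueness of the optimal $\gamma$, which is precisely the argument you have written out in full (uniqueness upgrading subsequential convergence of $\gamma_n$ to convergence of the whole sequence, then testing against the continuous function $\Phi_{\phi,\tau}$ via Fubini). No gaps; your version simply supplies the details the paper omits.
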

\begin{proof}
This is a simple consequence of Proposition \ref{stabsigma}, of the continuity of the function $(x,y)\mapsto ||x-y||$, and of the uniqueness of the optimal $\gamma$.   
\end{proof}

\section{$L^p$ summability of boundary-to-boundary transport densities} \label{Sec.3}
In all that follows, $\Omega$ is a compact and uniformly convex domain in $\mathbb{R}^d$, $f^+$ and $f^-$ are two positive Borel measures concentrated on the boundary, and
at least one of them will belong to $L^1(\partial\Omega)$. Since we are only interested in the transport density between these two measures, we can always assume that they have no common mass, as the transport density only depends on the difference $f^+-f^-$ and common mass can be subtracted to both of them.

\noindent In the case \,$d=2$, by Proposition \ref{Unicity}, there will exist one unique optimal transport plan between these two
measures, while Proposition \ref{Unicity-d>2} gives the existence of a unique optimal transport plan between them in arbitrary dimension \,$d \geq 2$, but under the assumption that the norm \,$||\cdot||$ is the Euclidean one $|\cdot|$.
For simplicity of exposition, we say that the assumption {\bf(UA)} (standing for ``Uniqueness Assumption'') holds if
$${\bf{(UA)}}\qquad\mbox{either}\,\,\,d=2\,\,\,\,\mbox{or}\,\,\,\,d \geq 2 \,\,\,\,\mbox{and}\,\,\,||\cdot||\,\,\,\mbox{is the Euclidean norm}.$$
 We will make use of the transport density \,$\sigma$\, and of \,$\sigmatau$, defined in \eqref{defsigmatau} and provide estimate on them.
The main point is the following estimate.

\begin{proposition}\label{Lptau}
Suppose that the domain \,$\Omega \subset \mathbb{R}^d$ is uniformly convex, with all its curvatures bounded from below by a constant $\kappa>0$, take $p>1$ and $f^+\in L^p(\partial\Omega)$. If $p>2$ also suppose $\int f^+(x)^p\mathrm{d}(x,\spt(f^-))^{2-p}d\surf(x)<+\infty$. If \,{\bf{(UA)}} holds, then there exists a constant $C=C(\kappa,\diam(\Omega))$ such that we have

$$\int_\Omega |\sigmatau|^p\dd x\leq C\left(\int_0^\tau\frac{1}{(1-t)^{(d-1)(p-1)}}\dd t \right)\int_{\partial\Omega}f^+(x)^pD(x)^{2-p}\dd\surf(x),$$\\
where $D(x):=|x-T(x)|$ is the distance between each point $x\in\partial\Omega$ and its image $T(x)$ in the optimal transport map which induces the optimal plan $\gamma$. 
\end{proposition}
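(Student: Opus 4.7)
The plan is to represent $\sigmatau$ as the pushforward of an explicit measure under the linear-interpolation map $F(x,t):=(1-t)x+tT(x)$, apply the area formula, and lower-bound the Jacobian $J_F$ using uniform convexity of $\Omega$.

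\textbf{Step 1 (Optimal map and pushforward).} Under (UA), Propositions \ref{Unicity} and \ref{Unicity-d>2} yield a unique optimal plan $\gamma=(\mathrm{id},T)_{\#}f^+$ for a Borel map $T\colon A^+\to A^-$. Setting $T_t(x):=(1-t)x+tT(x)$ and $D(x):=|x-T(x)|$, formula \eqref{defft} gives $f_t=(T_t)_{\#}(Df^+)$; hence $\sigmatau$ is the pushforward under $F\colon A^+\times(0,\tau)\to\Omega$, $F(x,t):=T_t(x)$, of the product measure $D(x)f^+(x)\,\dd\surf(x)\otimes \dd t$.

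\textbf{Step 2 (Area formula).} The map $F$ is injective on $A^+\times(0,1)$: if $F(x_1,t_1)=F(x_2,t_2)$ with $t_i\in(0,1)$, the two transport rays $[x_i,T(x_i)]$ meet at an interior point, so the non-crossing property recalled in Section \ref{Sec.2} forces the rays to coincide, giving $(x_1,t_1)=(x_2,t_2)$ once the alternative $x_1=T(x_2)$ is ruled out by $A^+\cap A^-=\emptyset$. Since $T$ is only measurable in general, I first approximate $f^\pm$ by smooth data (for which $T$ becomes smooth) and pass to the limit via Propositions \ref{stabsigma} and \ref{stabsigma-2}. The area formula then produces
\begin{equation*}
\int_\Omega \sigmatau(z)^p\,\dd z \;=\; \int_0^\tau\!\!\int_{\partial\Omega}\frac{(D(x)f^+(x))^p}{J_F(x,t)^{p-1}}\,\dd\surf(x)\,\dd t,
\end{equation*}
where $J_F$ is the absolute value of the Jacobian of $F$ in local coordinates on $A^+\times(0,1)$.

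\textbf{Step 3 (Jacobian lower bound).} The technical heart of the proof is
\begin{equation*}
J_F(x,t)\;\geq\; c(\kappa,\diam(\Omega))\,(1-t)^{d-1}\,D(x)^2,\qquad (x,t)\in A^+\times(0,1).
\end{equation*}
Pick coordinates at $x$ making the tangent hyperplane to $\partial\Omega$ horizontal and the inward normal $n_x=e_d$; locally $\partial\Omega$ is the graph $z_d=h(z')$ with $h(0)=0$, $\nabla h(0)=0$, $D^2 h\geq\kappa I$. Writing $y=T(x)$ and $v=y-x=(y',h(y'))$, uniform convexity gives
\begin{equation*}
v\cdot n_x = h(y')\geq \tfrac{\kappa}{2}|y'|^2 \gtrsim D(x)^2.
\end{equation*}
Expanding the $d\times d$ determinant $\det[\partial_1 F\,|\,\ldots\,|\,\partial_{d-1}F\,|\,\partial_t F]$ in powers of $t$, the leading term $(1-t)^{d-1}\det[\tau_1|\ldots|\tau_{d-1}|v]$ equals, in absolute value, $(1-t)^{d-1}\,|v\cdot n_x|\gtrsim (1-t)^{d-1}D(x)^2$. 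It remains to show that the subleading terms, which involve the tangential differential $(dT)_\|$, do not cancel the leading one. This uses the non-crossing of transport rays on a uniformly convex domain: in 2D it boils down to an explicit sign computation showing that, with the canonical orientation of $\partial\Omega$, the optimal map $T$ is orientation-reversing, so that the contributions of the leading and subleading terms add rather than cancel; in higher dimensions with the Euclidean norm the twist condition used in the proof of Proposition \ref{Unicity-d>2}, together with uniform convexity of $\partial\Omega$, plays the analogous role.

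\textbf{Step 4 (Conclusion).} Substituting the Jacobian bound into the identity of Step 2 and separating variables,
\begin{equation*}
\int_\Omega \sigmatau^p\,\dd z \leq C \int_0^\tau\!\frac{\dd t}{(1-t)^{(d-1)(p-1)}}\int_{\partial\Omega} f^+(x)^p\, D(x)^{p-2(p-1)}\,\dd\surf(x),
\end{equation*}
and the algebraic identity $p-2(p-1)=2-p$ yields exactly the claimed inequality.

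The main obstacle is Step 3: the normal-component estimate $v\cdot n_x\gtrsim D(x)^2$ is a textbook consequence of uniform convexity, but controlling the tangential part of the Jacobian---equivalently, ruling out cancellations in the determinant expansion---requires the geometric structure of optimal boundary-to-boundary transport on a uniformly convex domain, and this is precisely where the uniqueness assumption (UA) is used.
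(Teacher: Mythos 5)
Your overall architecture (parametrize the union of transport rays by $(x,t)\mapsto (1-t)x+tT(x)$, apply a change of variables, and lower-bound the Jacobian by $(1-t)^{d-1}(T(x)-x)\cdot\mathbf{n}(x)\gtrsim (1-t)^{d-1}D(x)^2$ via uniform convexity) is the same as the paper's, and your Step 4 algebra is exactly right. But there is a genuine gap at the point you yourself flag as ``the main obstacle'': your Jacobian contains the tangential differential of $T$, and you never actually control it. The paper avoids this entirely by a different order of operations: it first takes $f^-$ \emph{finitely atomic}, so that on each piece $\chi_i$ all rays point to a single fixed atom $x_i$ and the map $(s,t)\mapsto (1-t)(s,\alpha_i(s))+tx_i$ involves no derivative of $T$ whatsoever --- the Jacobian matrix has the constant column $x_i-x$ and the determinant is computed exactly. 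Only afterwards does it pass to general $f^-$ by weak approximation, using lower semicontinuity of the $L^p$ norm on the left and dominated convergence (via the a.e.\ convergence $T_n\to T$ from Proposition \ref{stabsigma}) on the right. Your route instead requires (i) that smoothing $f^\pm$ makes $T$ smooth, which is unjustified --- there is no regularity theory for optimal maps between boundary measures, and this is not what Propositions \ref{stabsigma}--\ref{stabsigma-2} give you; (ii) an area formula for a map $F$ that is only as regular as $T$, i.e.\ BV/monotone on the boundary, possibly discontinuous, so Lusin's condition (N) and the validity of the change of variables are themselves in question; and (iii) the no-cancellation claim for the terms involving $(dT)_\|$.

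On point (iii): in $d=2$ the assertion that $T$ is orientation-reversing (so that $\det[(1-t)x'+t(T\circ x)'\mid v]$ splits into two terms of the same sign) is plausible and provable from non-crossing of rays, but you only assert that ``an explicit sign computation'' exists; and in $d\geq 3$ the determinant expands into $2^{d-1}$ mixed terms, and the claim that the twist condition forces all of them to have a consistent sign is a substantive unproven statement, not an analogue of a 2D orientation count. Since the entire estimate hinges on a \emph{lower} bound for $J_F$, losing control of even one of these terms destroys the proof. So as written the argument does not close; the fix is precisely the paper's discretize-then-pass-to-the-limit scheme, which makes the Jacobian computation exact and pushes all the measure-theoretic difficulty into the stability statements that the paper has already established.
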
 
\begin{proof} 
Following the same strategy as in \cite{7}, we first assume that the target measure $f^-$ is finitely atomic (the points $(x_j)_{j=1,...,m}$ being its atoms). Let \,$T$\, be the optimal transport map from $f^+$ onto $f^-$. For all $j \in \{1,...,m\}$, consider $T^{-1}(\{x_j\}) \subset \partial\Omega$, and partition it in finitely many smaller part, so that each can be represented by a single smooth chart parameterizing a part of $\partial\Omega$. We will call $(\chi_i)_{i=1,...,n}$ these parts. Let us call $\Omega_i$ the union of all transport rays starting from points in $\chi_i$, all these rays pointing to a common point $x_{j(i)}$ (but we will write $x_i$ for simplicity). Call $\Omega_i^{(\tau)}$ the set of points of the form $(1-t)x+tx_i$, with $x\in\chi_i$\, and \,$t\leq \tau$. The sets  $\Omega_i$ (and hence also $\Omega_i^{(\tau)}$) are essentially disjoint (the mutual intersections between them are Lebesgue-negligible). 

Set \,$\sigmatau_i:=\sigmatau \res \Omega_i$, for every $i \in \{1,...,n\}$. Of course, $\sigmatau_i$ is concentrated on $\Omega_i^{(\tau)}$.  In order to get \,$L^p$\, estimates on $\sigmatau$, we want to give an explicit formula of each \,$\sigmatau_i$. Fix $i \in \{1,...,n\}$ and let $\alpha_i$ be a regular function such that, up to choosing a suitable system of coordinates, $\chi_i$ is contained in the graph of $s \mapsto \alpha_i(s)$, with $s\in\tilde\chi_i\subset\R^{d-1}$ (hence, the sets $\tilde\chi_i$ are the $(d-1)$-dimensional domains where the charts are defined). For every \,$y \in \Omega_i^{(\tau)}$, there are a unique point $x=(s,\alpha_i(s)) \in \chi_i$\, and \,$t \in [0,\tau]$ such that
$$y:=(y',y_d)=(1-t)x + t x_i=((1-t)s + t x_{i}',(1-t) \alpha_i(s)+t x_{i,d}),$$
where we write $x_i:=(x_{i}',x_{i,d})$ by separating the last (vertical) coordinate from the others.
For all \,$\varphi \in C(\Omega_i)$, we get 
\begin{eqnarray*}
\int_{\Omega_i} \varphi(y) \,\mathrm{d}\sigmatau_i(y)
 &=& \int_{\chi_i} \int_0^{\tau} \varphi((1-t)x+tx_i)||x-x_i||\,\mathrm{d}t\,\mathrm{d}f^+(x)\\
 &=& \int_{\Omega_i^{(\tau)}} \varphi(y)\,\frac{||(s,\alpha_i(s))-x_i||\,f^+(s,\alpha_i(s))\,\sqrt{1+|\nabla{\alpha_i}(s)|^2}}{J_i(t,s)}\,\mathrm{d}y,
\end{eqnarray*} 
where 
$J_i(t,s):=|\det(D_{(s,t)}(y',y_d))|$.
Hence, we get
\begin{eqnarray} \label{our transport density}
\sigmatau(y) &=& \frac{||(s,\alpha_i(s))-x_i||\,f^+(s,\alpha_i(s))\,\sqrt{1+|\nabla{\alpha_i}(s)|^2}}{J_i(t,s)},\,\,\,\,\,\mbox{for a.e.}\,\,\,y \in \Omega_i^{(\tau)}.
\end{eqnarray}
We then have
\begin{eqnarray*} \label{estimates}
||\sigmatau||_{L^p(\Omega)}^p &=& \sum_{i=1}^n\int_{\tilde\chi_i} \int_0^{\tau} \sigmatau((1-t)x + t x_i)^p J_i(t,s)\,\mathrm{d}t\,\mathrm{d}s \\  
&=& \sum_{i=1}^n \int_{\chi_i} \int_0^{\tau} \frac{||x-x_i||^p{f^+(x)}^p (1+|\nabla{\alpha_i}(s)|^2)^{\frac{p-1}{2}}}{J_i(t,s)^{p-1}}\,\mathrm{d}t\,\mathrm{d}\surf (x).
\end{eqnarray*}
Compute
$$D_{(s,t)}(y',y_d)=\begin{pmatrix}
 (1-t)\mathrm{I}\,&\,\, x_i'-s \\
 (1-t)\nabla{\alpha_i}(s) & x_{i,d} - \alpha_i(s) 
 \\ 
 \end{pmatrix},$$
 where \,$\mathrm{I}$\, is the $(d-1)\times(d-1)$ identity matrix. Up to considering sets $\chi_i$ which are very small, each one close to a point $x\in \partial\Omega$, and choosing a coordinate system where the vertical coordinate is parallel to the normal vector to $\partial \Omega$ at $x$, we can assume that $\nabla{\alpha_i}(s)$ is very small. At the limit, we can compute the above determinant as if it vanished, and thus we get $J_i(t,s)=(1-t)^{d-1}(x_{i,d}-\alpha(s))$ (as well as $1+|\nabla{\alpha_i}(s)|^2=1$). This allows to write the change-of-variable coefficients in an intrinsic way, and thus obtain
 \begin{equation*} \label{estimates2}
||\sigmatau||_{L^p(\Omega)}^p =\sum_{i=1}^n \int_{\chi_i} \int_0^{\tau} \frac{||x-x_i||^p{f^+(x)}^p }{(1-t)^{(d-1)(p-1)}\big((x_i-x)\cdot \mathbf{n}(x)\big)^{p-1}}\,\mathrm{d}t\,\dd\surf(x),
\end{equation*}
where $\mathbf{n}(x)$ is the inward normal vector to $\partial\Omega$ at $x$. Using the lower bound on the curvature of $\partial\Omega$ we have, for every pair of points \,$x$\, and \,$x_i$\, on $\partial\Omega$: 
$$(x_i-x)\cdot \mathbf{n}(x)\geq c|x-x_i|^2,$$
for a constant $c=c(\kappa,\diam\Omega)$. Using then \,$x_i=T(x)$\, for \,$x\in\chi_i$\, and the equivalence between the two norms $||\cdot||$ and $|\cdot|$, this provides the desired formula
 \begin{equation*} \label{estimates2}
||\sigmatau||_{L^p(\Omega)}^p \leq C\int_{\partial\Omega} \int_0^{\tau} \frac{|x-T(x)|^{2-p}{f^+(x)}^p }{(1-t)^{(d-1)(p-1)}}\,\mathrm{d}t\,\dd\surf(x).
\end{equation*}
%
This proves the claim when $f^-$ is atomic. If not, take a sequence $(f_n^-)_n$ of atomic measures converging to $f^-$ and concentrated on $\spt(f^-)$. Call $T_n$ the optimal maps from $f^+$ to $f^-_n$ and $D_n(x)=|x-T_n(x)|$. By Proposition \ref{stabsigma-2}, the partial transport densities \,$\sigmatau_n$ converge to the corresponding partial transport density \,$\sigmatau$ and by Proposition \ref{stabsigma} the optimal transport maps $T_n$ also converge a.e. to the optimal transport map $T$ inducing $\gamma$ (up to extracting a subsequence, since $L^2(f^+)$ convergence implies a.e. convergence up to a subsequence). Moreover,  we have $ \int_{\partial\Omega}D_n(x)^{2-p}{f^+(x)}^p \,\dd\surf(x)\to  \int_{\partial\Omega}D(x)^{2-p}{f^+(x)}^p \,\dd\surf(x)$ by dominated convergence, using either $p\leq 2$\, and $f^+\in L^p(\partial\Omega)$ or $ \int_{\partial\Omega}\mathrm{d}(x,\spt(f^-))^{2-p}{f^+(x)}^p \,\dd\surf(x)<+\infty$, according to our assumptions. Using semicontinuity on the left hand side, we get 
$$||\sigmatau||_{L^p(\Omega)}^p \leq \liminf_n ||\sigmatau_n||_{L^p(\Omega)}^p \leq C\,\bigg(\int_0^\tau\frac{1}{(1-t)^{(d-1)(p-1)}}\,\mathrm{d}t \bigg) \int_{\partial\Omega}D(x)^{2-p}{f^+(x)}^p \,\dd\surf(x)$$
and the result is proven in general.  
\end{proof}

From the above estimate, we can deduce many integrability results.
\begin{proposition}\label{Lpd-1}
Let $\Omega$ be a uniformly convex domain in $\mathbb{R}^d$ and suppose $f^+\in L^p(\partial\Omega)$ with $p<d/(d-1)$. If \,{\bf{(UA)}} holds, then the transport density $\sigma$ between $f^+$ and any $f^-\in \mathcal{M}^+(\partial\Omega)$ is in $L^p(\Omega)$. \end{proposition}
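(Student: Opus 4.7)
The plan is to deduce this from Proposition \ref{Lptau} by integrating in $\tau$ up to $\tau=1$. First I would extract two useful consequences of the hypothesis $p<d/(d-1)$. On the one hand $(d-1)(p-1)<1$, so
\[
\int_0^1 \frac{\dd t}{(1-t)^{(d-1)(p-1)}} < +\infty.
\]
On the other hand, since $d/(d-1)\leq 2$ for $d\geq 2$, we automatically have $p<2$, so the exponent $2-p$ appearing in the weight $D(x)^{2-p}$ is nonnegative. Hence $D(x)^{2-p}\leq \diam(\Omega)^{2-p}$ for every $x\in\partial\Omega$, and in particular the extra integrability assumption $\int f^+(x)^p\,d(x,\spt f^-)^{2-p}\,\dd\surf(x)<+\infty$ required in Proposition \ref{Lptau} when $p>2$ is vacuous in the present regime and we simply get $\int_{\partial\Omega} f^+(x)^pD(x)^{2-p}\,\dd\surf(x)\leq \diam(\Omega)^{2-p}\|f^+\|_{L^p(\partial\Omega)}^p$.

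Plugging these two bounds into Proposition \ref{Lptau}, I would obtain, for every $\tau<1$,
\[
\|\sigmatau\|_{L^p(\Omega)}^p \;\leq\; C\,\diam(\Omega)^{2-p}\,\|f^+\|_{L^p(\partial\Omega)}^p\,\int_0^1 \frac{\dd t}{(1-t)^{(d-1)(p-1)}} \;=:\; M < +\infty,
\]
a bound which is independent of $\tau$ (and of $f^-$, as long as {\bf(UA)} holds so that the unique optimal $T$ from Propositions \ref{Unicity}--\ref{Unicity-d>2} exists).

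Finally I would pass to the limit $\tau\to 1^-$. By construction $\sigmatau=\int_0^\tau f_t\,\dd t$ is an increasing family of positive measures, each absolutely continuous w.r.t.\ Lebesgue, and monotonically converging (as measures) to $\sigma=\int_0^1 f_t\,\dd t$ when $\tau\to 1^-$. The only step that deserves a moment of care is making sure this convergence transfers the uniform $L^p$ bound to $\sigma$ itself. This can be done in two equivalent ways: either by monotone convergence applied to the densities $\sigmatau(x)$ (which are pointwise increasing in $\tau$ to the density of $\sigma$), giving $\|\sigma\|_{L^p(\Omega)}^p\leq M$ directly; or, using reflexivity of $L^p(\Omega)$ for $p>1$, by extracting a weak $L^p$ limit of $\sigmatau$ along $\tau\to 1^-$ and identifying it with $\sigma$ via Proposition \ref{stabsigma-2} (or simply via the weak-$*$ convergence of measures). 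In either case we conclude $\sigma\in L^p(\Omega)$, with an explicit bound in terms of $\|f^+\|_{L^p(\partial\Omega)}$, $\diam(\Omega)$ and the lower curvature bound $\kappa$. The real work has been done in Proposition \ref{Lptau}; the present statement is essentially a corollary, the only genuine point being the combined integrability in $t$ (which forces $p<d/(d-1)$) and uniform control of $D(x)^{2-p}$ (granted for free by $p\leq 2$).
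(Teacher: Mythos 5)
Your proposal is correct and follows essentially the same route as the paper: apply Proposition \ref{Lptau}, note that $p<d/(d-1)$ gives both $(d-1)(p-1)<1$ (convergence of the $t$-integral) and $p\leq 2$ (boundedness of $D(x)^{2-p}$). The only difference is that you take $\tau\to 1^-$ with a limiting argument, whereas the paper simply applies Proposition \ref{Lptau} directly with $\tau=1$ (which its statement permits), so that extra step is harmless but unnecessary.
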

\begin{proof}
Note that our assumption on $p$ implies $p\leq 2$. To prove this result it is enough to use Proposition \ref{Lptau} with $\tau=1$, since in this case the integral $\int_0^1\frac{1}{(1-t)^{(d-1)(p-1)}}\dd t$ converges, and the term $D(x)^{2-p}$ is bounded since $p\leq 2$. 
\end{proof}

\begin{proposition}\label{Lp2}
Let \,$\Omega$\, be a uniformly convex domain in \,$\mathbb{R}^d$ and suppose that \,$f^+,f^-\in L^p(\partial\Omega)$ with \,$p\leq 2$. If \,{\bf{(UA)}} holds, then the transport density \,$\sigma$ between these two measures is in $L^p(\Omega)$.\end{proposition}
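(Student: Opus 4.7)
The plan is to decompose $\sigma$ into two halves based on the time parameter in \eqref{defft}--\eqref{defsigmatau} and bound each half using Proposition \ref{Lptau}. The point is that the integral $\int_0^\tau (1-t)^{-(d-1)(p-1)}\,\dd t$ is finite for every $\tau<1$, and we have $\tau<1$ whenever we cut at midpoint. The exponent $2-p$ on $D(x)$ is non-negative under the hypothesis $p\le 2$, so that factor is harmless.

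First I would write
\begin{equation*}
\sigma = \int_0^1 f_t\,\dd t = \int_0^{1/2} f_t\,\dd t + \int_{1/2}^1 f_t\,\dd t = \sigma^{(1/2)} + \widetilde\sigma^{(1/2)},
\end{equation*}
where $\widetilde\sigma^{(1/2)}$ is defined exactly as in \eqref{defsigmatau} but starting from the optimal transport plan $\bar\gamma:=S_\#\gamma$, with $S(x,y)=(y,x)$, for the reverse transport problem (swapping the roles of $f^+$ and $f^-$). Indeed, the change of variable $s=1-t$ together with $\Pi_{1-s}(x,y)=\Pi_s(y,x)$ shows $\int_{1/2}^1 f_t\,\dd t = \int_0^{1/2}\bar f_s\,\dd s$, which is precisely the partial transport density of the reversed problem up to time $\tau=1/2$.

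Next, I would apply Proposition \ref{Lptau} with $\tau=1/2$ to each piece. The assumption {\bf (UA)} is inherited by the reverse problem (it does not distinguish $f^+$ from $f^-$), and since $p\le 2$ we do not need the extra moment hypothesis; we just use $D(x)^{2-p}\le \diam(\Omega)^{2-p}$. Setting $I_{1/2}:=\int_0^{1/2}(1-t)^{-(d-1)(p-1)}\,\dd t<\infty$, Proposition \ref{Lptau} gives
\begin{equation*}
\|\sigma^{(1/2)}\|_{L^p(\Omega)}^p \le C\,I_{1/2}\,\diam(\Omega)^{2-p}\,\|f^+\|_{L^p(\partial\Omega)}^p,
\end{equation*}
and symmetrically (applied to the reversed problem)
\begin{equation*}
\|\widetilde\sigma^{(1/2)}\|_{L^p(\Omega)}^p \le C\,I_{1/2}\,\diam(\Omega)^{2-p}\,\|f^-\|_{L^p(\partial\Omega)}^p.
\end{equation*}
Finally, the triangle inequality yields $\|\sigma\|_{L^p(\Omega)}\le \|\sigma^{(1/2)}\|_{L^p(\Omega)}+\|\widetilde\sigma^{(1/2)}\|_{L^p(\Omega)}<\infty$, proving $\sigma\in L^p(\Omega)$.

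The only genuinely delicate point is the symmetry step: one must be sure that reversing the transport plan yields an admissible object for Proposition \ref{Lptau} with $f^-$ in the role of $f^+$, and that the ``partial'' piece $\int_{1/2}^1 f_t\,\dd t$ really equals the partial transport density of the reversed problem. This is straightforward because the optimal transport plan for the reverse problem is $S_\#\gamma$ (by symmetry of the cost $\|x-y\|$) and because of the change of variable $t\mapsto 1-t$; no deeper work is needed. All remaining steps are routine applications of previously established results.
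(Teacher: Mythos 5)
Your proof is correct and follows essentially the same route as the paper: the paper also splits $\sigma$ at $\tau=1/2$ into $\sigma^{(1/2)}$ plus the corresponding half of the reversed transport problem, bounds each piece via Proposition \ref{Lptau} using that $D(x)^{2-p}$ is bounded for $p\le 2$, and concludes symmetrically. You merely spell out the symmetry step ($\bar\gamma=S_\#\gamma$ and the change of variable $t\mapsto 1-t$) that the paper leaves implicit.
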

\begin{proof}
In this case the integral in the estimate of $\sigma=\sigmatau$ with $\tau=1$ can diverge, so we need to adapt our strategy. Following again \cite{7}, we write
$$\sigma=\sigma^+ + \sigma^-,$$
where \,$\sigma^+=\sigma^{(1/2)}$\, and \,$\sigma^-=\sigma-\sigma^{(1/2)}$. In this case the $L^p$ summability of $f^+$ guarantees that of $\sigma^+ $ since $p\leq 2$ implies that $D(x)^{2-p}$ is bounded. Symmetrically, the $L^p$ summability of $f^-$ guarantees that of $\sigma^- $.

Note that, thanks to Propositions \ref{Unicity} and \ref{Unicity-d>2}, we  do not face the same difficulties as in \cite{7}, where it was not obvious to glue together estimates on $\sigma^+$ obtained by approximating $f^-$ and estimates on $\sigma^-$ coming from the approximation of $f^+$.   
\end{proof}

We will see in Section \ref{Sec.4} that the same result is false for $p>2$, and that in order to obtain higher integrability we need to assume much more on $f^+$ and $f^-$.

\begin{remark} We do not discuss it here in details, but the summability result also works for Orlicz spaces with growth less than quadratic, i.e. we have, for every convex and superlinear function $\Psi=\R_+\to\R_+$ with $\Psi(s)\leq C(s^2+1)$,
$$\int_\Omega \Psi(\sigma(x))\dd x\leq C \int_{\partial\Omega}\Psi(|f(x)|)\dd\surf(x)+C.$$
This can be proven in similar ways with suitable manipulations on the function $\Psi$. In particular, this implies that $f\in L^1(\partial\Omega)\impl \sigma\in L^1(\Omega)$.
\end{remark}

\begin{proposition}\label{LpCalpha}
Suppose that \,$f^+,f^-\in C^{0,\alpha}(\partial\Omega)$\, for \,$0<\alpha\leq 1$. If \,{\bf{(UA)}} holds, then the transport density $\sigma$ between these two measures is in $L^p(\Omega)$\, for \,$p=2/(1-\alpha)$ (with $p=\infty$\, for \,$\alpha=1$).
\end{proposition}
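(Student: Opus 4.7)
The engine of the proof is the observation that if $f^+,f^-\in C^{0,\alpha}(\partial\Omega)$ are two continuous nonnegative functions with no common mass, then $f^+$ must vanish identically on $\spt(f^-)$ (the positivity sets $\{f^\pm>0\}$ are disjoint open subsets of $\partial\Omega$, hence $\spt(f^-)$ lies in the closed set $\{f^+=0\}$). Combined with the Hölder bound and the equivalence, on the smooth curve/hypersurface $\partial\Omega$, between the intrinsic and the Euclidean distances, this yields the pointwise estimate
\[
f^+(x)\le C\,\mathrm{d}(x,\spt(f^-))^\alpha\qquad\text{for every }x\in\partial\Omega,
\]
with the symmetric bound for $f^-$. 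This is the technical engine that lets us absorb the singular factor $D(x)^{2-p}$ appearing in Proposition \ref{Lptau} when $p>2$.

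Following the scheme of Proposition \ref{Lp2}, I split $\sigma=\sigma^{(1/2)}+(\sigma-\sigma^{(1/2)})$ and bound each half separately by applying Proposition \ref{Lptau} with $\tau=1/2$: first to the transport from $f^+$ to $f^-$ for $\sigma^{(1/2)}$, then, using the symmetry of the transport density under time-reversal (the transport density of the reversed problem is still $\sigma$, and its own partial transport density with $\tau=1/2$ is exactly $\sigma-\sigma^{(1/2)}$), to the transport from $f^-$ to $f^+$ for the other half. The time integral $\int_0^{1/2}(1-t)^{-(d-1)(p-1)}\dd t$ is trivially finite. For the crucial boundary integral (either the one in the statement of Proposition \ref{Lptau}, or its variant in terms of $\mathrm{d}(x,\spt(f^-))^{2-p}$ used in the hypothesis for $p>2$), the Hölder bound gives
\[
f^+(x)^p\,\mathrm{d}(x,\spt(f^-))^{2-p}\le C^p\,\mathrm{d}(x,\spt(f^-))^{\alpha p+2-p},
\]
and the exponent $\alpha p+2-p$ vanishes exactly when $p=2/(1-\alpha)$; the integrand is then uniformly bounded on $\partial\Omega$. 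This simultaneously verifies the hypothesis of Proposition \ref{Lptau} and provides the $L^p$ estimate on $\sigma^{(1/2)}$.

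The endpoint case $\alpha=1$, $p=\infty$ falls outside the scope of Proposition \ref{Lptau} and has to be handled by a direct pointwise argument read off the explicit formula \eqref{our transport density} used in its proof: after the uniform convexity bound $(T(x)-x)\cdot\mathbf n(x)\ge c|x-T(x)|^2$, one obtains
\[
\sigma^{(\tau)}(y)\lesssim \frac{f^+(x)}{(1-t)^{d-1}|x-T(x)|},
\]
and the Lipschitz estimate $f^+(x)\le L\,|x-T(x)|$ (from $f^+(T(x))=0$) cancels the denominator, giving $\sigma^{(1/2)}\in L^\infty$; the symmetric argument handles $\sigma-\sigma^{(1/2)}$. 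The main obstacle I foresee is making the approximation argument uniform: to apply Proposition \ref{Lptau} one approximates $f^-$ by atomic measures $f^-_n$, and one must be able to choose $f^-_n$ supported in $\spt(f^-)$ so that $D_n(x)\ge\mathrm{d}(x,\spt(f^-))$ holds uniformly in $n$. This keeps the Hölder-based bound $f^+(x)^pD_n(x)^{2-p}\le C^p$ uniform, allowing dominated convergence and semicontinuity (via Proposition \ref{stabsigma-2}) to transfer the estimate to the non-atomic case cleanly.
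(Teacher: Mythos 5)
Your proof is correct and follows essentially the same route as the paper's: the Hölder bound $f^+(x)\le C\,\mathrm{d}(x,\spt(f^-))^\alpha$ (coming from $f^+$ vanishing on $\spt(f^-)$ since the two measures have no common mass) makes the exponent $2-p+p\alpha$ vanish exactly at $p=2/(1-\alpha)$, and the splitting $\sigma=\sigma^{(1/2)}+(\sigma-\sigma^{(1/2)})$ with Proposition \ref{Lptau} applied symmetrically to $f^+$ and $f^-$ is precisely the paper's argument (which also already builds the atomic approximants $f^-_n$ concentrated on $\spt(f^-)$, so the uniformity you worry about is in place). The only cosmetic difference is the endpoint $\alpha=1$, where the paper obtains $\sigma\in L^\infty$ by letting $p\to\infty$ in the uniform $L^p$ bounds rather than by your direct pointwise estimate; both work.
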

\begin{proof}
First, we check that we can apply Proposition \ref{Lptau}, since in this case we need to use $p=2/(1-\alpha)>2$. Consider a point $x$ with $f^+(x)>0$, and take a point $y\in \spt(f^-)$ with $|x-y|=\mathrm{d}(x,\spt(f^-))$. Then we have $f^+(y)=0$ (since $f^+$ and $f^-$ have no mass in common) and $f^+(x)=|f^+(x)-f^+(y)|\leq C|x-y|^\alpha$. This provides $\mathrm{d}(x,\spt(f^-))^{2-p}{f^+(x)}^p\leq C\mathrm{d}(x,\spt(f^-))^{2-p+p\alpha}$. With our choice of $p$, this quantity is bounded since the exponent is non-negative (for $\alpha<1$ the choice $p= 2/(1-\alpha)$ provides a zero exponent; for $\alpha=1$ this exponent is equal to \,$2$\, for any \,$p$). This in particular guarantees  $ \int_{\partial\Omega}\mathrm{d}(x,\spt(f^-))^{2-p}{f^+(x)}^p \,\dd\surf(x)<+\infty$. Of course, the same can be performed on $f^-$. Then, the same strategy as in Proposition \ref{Lp2} shows
$$||\sigma||_{L^p(\Omega)}^p \leq C \bigg(\int_{\partial\Omega}D(x)^{2-p}{f^+(x)}^p \,\dd\surf(x)+\int_{\partial\Omega}D^-(x)^{2-p}{f^-(x)}^p \,\dd\surf(x)\bigg),$$
where \,$D^-(x):=|x-T^{-1}(x)|$\, is defined as $D(x)$, but relatively to $f^-$. Using $D(x)=|x-T(x)|\geq \mathrm{d}(x,\spt(f^-))$ and the fact that the exponent $2-p$ is negative, the quantity $|D(x)|^{2-p}{f^+(x)}^p\leq CD(x)^{2-p+p\alpha}$ is bounded. Since a similar argument can be performed on $f^-$, we obtain finiteness of the norm $||\sigma||_{L^p(\Omega)}$ (and for $\alpha=1$ we obtain $\sigma\in L^\infty$ by passing to the limit $p\to\infty$). \end{proof}
\section{Counter-example to the $L^{2+\varepsilon}$ summability} \label{Sec.4}
 In this section, we show that the $L^p$ estimates for the transport density, in the case where $p>2$, fail even if we assume $f^\pm\in L^\infty(\partial\Omega)$. More precisely, we will construct an example of $f^\pm$, where $f^\pm \in L^\infty(\partial\Omega)$,
 but the transport density $\sigma$ between them does not belong to $L^{2+\varepsilon}(\Omega)$ for any $\varepsilon>0$. For simplicity, this will be done in dimension $d=2$ and the norm $||\cdot||$ will be the Euclidean one.
 
  Let $\Omega$ be a disk and let $(\chi_n^\pm)_n$ be a sequence of arcs in $\partial\Omega$ such that $\mathcal{H}^1(\chi_n^\pm)=\ve_n$, for some sequence $\ve_n$ to be chosen later. We will put these arcs one after the other, so that they only have endpoints in common, and we assume that they are ordered in the following way: $\chi_{n-1}^+,\,\chi_{n-1}^-,\,\chi_{n}^-,\,\chi_{n}^+,\,\chi_{n+1}^+,\,\chi_{n+1}^-$, for all $n$ (see Figure \ref{Fig}).
 \begin{figure}[h]
 \begin{tikzpicture}[scale=0.8,line cap=round,line join=round,>=triangle 45,x=1cm,y=1cm]
\clip(-3.2,-3.3) rectangle (3.6,3.6);
\draw(0,0) circle (3cm);
\draw [shift={(0,0)},color=ffqqtt]  plot[domain=-4.71:0.91,variable=\t]({1*3*cos(\t r)+0*3*sin(\t r)},{0*3*cos(\t r)+1*3*sin(\t r)});
\draw [shift={(0,0)},color=ffqqtt]  plot[domain=0.26:6.06,variable=\t]({1*3*cos(\t r)+0*3*sin(\t r)},{0*3*cos(\t r)+1*3*sin(\t r)});
\draw [shift={(0,0)},color=ttttff]  plot[domain=0.91:1.57,variable=\t]({1*3*cos(\t r)+0*3*sin(\t r)},{0*3*cos(\t r)+1*3*sin(\t r)});
\draw [shift={(0,0)},color=ttttff]  plot[domain=0.26:0.91,variable=\t]({1*3*cos(\t r)+0*3*sin(\t r)},{0*3*cos(\t r)+1*3*sin(\t r)});
\draw [shift={(0,0)},color=qqffqq]  plot[domain=-0.22:0.26,variable=\t]({1*3*cos(\t r)+0*3*sin(\t r)},{0*3*cos(\t r)+1*3*sin(\t r)});
\draw [shift={(0,0)},color=qqffqq]  plot[domain=5.66:6.06,variable=\t]({1*3*cos(\t r)+0*3*sin(\t r)},{0*3*cos(\t r)+1*3*sin(\t r)});
\draw [shift={(0,0)},color=ffffqq]  plot[domain=5.1:5.39,variable=\t]({1*3*cos(\t r)+0*3*sin(\t r)},{0*3*cos(\t r)+1*3*sin(\t r)});
\draw [shift={(0,0)},color=ffffqq]  plot[domain=5.39:5.66,variable=\t]({1*3*cos(\t r)+0*3*sin(\t r)},{0*3*cos(\t r)+1*3*sin(\t r)});
\draw (0,3)-- (2.9,0.78);
\draw (2.9,0.78)-- (2.44,-1.74);
\draw (2.44,-1.74)-- (1.14,-2.78);
\draw (0.56,2.95)-- (2.7,1.31);
\draw (1.18,2.76)-- (2.38,1.83);
\draw (2.99,0.22)-- (2.7,-1.31);
\draw (2.92,0.67)-- (2.52,-1.6);
\begin{scriptsize}
\draw[color=black] (1.12,3.2) node {$\chi_{n-1}^+$};
\draw[color=black] (3,1.9) node {$\chi_{n-1}^-$};
\draw[color=black] (3.4,0.34) node {$\chi_n^-$};
\draw[color=black] (3.1,-1.4) node {$\chi_n^+$};
\draw[color=black] (2.6,-2.25) node {$\chi_{n+1}^+$};
\draw[color=black] (1.8,-2.8) node {$\chi_{n+1}^-$};
\draw[color=black] (2.4,-0.26) node {$\Delta_n$};
\end{scriptsize}
\end{tikzpicture}
\caption{\label{Fig}}
\end{figure}

  \noindent Set $f^\pm=\ind_{\chi^\pm}$, where $\chi^\pm=\cup_n \chi_n^\pm$, and let $T$ be the optimal transport map from $f^+$ to $f^-$. Correspondingly, let $\sigma$ be the transport density. We see easily that the restriction of \,$T$ to $\chi_n^+$ is the optimal transport map \,$T_n$ between $f_n^+$ and $f_n^-$, where $f_n^\pm$ is the restriction of $f^\pm$ to $\chi_n^\pm$. Moreover, if we denote by $\Delta_n$ the union of all transport rays from $f_n^+$ onto $f_n^-$, then the restriction of the transport density \,$\sigma$ to $\Delta_n$ is the transport density $\sigma_n$ between $f_n^+$ and $f_n^-$. We want to compute this density $\sigma_n$. Let $s \mapsto \alpha_n(s)$ be a parameterization of \,$\chi_n^+ \cup \chi_n^-$ where $s=0$ corresponds to the boundary point between the two arcs. It is clear that $T_n(s,\alpha_n(s))=(-s,\alpha_n(s))$, for all $s \in [0,\ve_n]$. Then, for every $y \in \Delta_n$, there is a unique $(t,s) \in [0,1] \times [0,\ve_n]$ such that
 $$y:=(y_1,y_2)=(1-t)(s,\alpha_n(s)) + t(-s,\alpha_n(s))=((1-2t)s,\alpha_n(s)).$$\\
 Hence, for every $\varphi \in C(\Delta_n)$, we have
\begin{eqnarray*}
\int_{\Delta_n} \varphi(y)\,\sigma_n(y)\,\mathrm{d}y &=&\int_{\chi_n^+} \int_0^1 \varphi((1-t)x+tT_n(x)) |x-T_n(x)| f_n^+(x) \,\mathrm{d}t\,\mathrm{d}x\\ 
&=&\int_{0}^{\ve_n} \int_0^1 \varphi((1-2t)s,\alpha_n(s))\,2s\, \sqrt{1+{\alpha_n^\prime(s)}^2} \,\mathrm{d}t\,\mathrm{d}s\\ 
&=& \int_{\Delta_n} \varphi(y)\,\frac{2s\, \sqrt{1+{\alpha_n^\prime(s)}^2}}{J_n(t,s)} \,\mathrm{d}y,
\end{eqnarray*}\\ \\
where \,$J_n(t,s):=|\det(D_{(t,s)}(y_1,y_2))|$\, on \,$\Delta_n$.
This provides
 \begin{equation*}
 \sigma_n(y) =\frac{2 s \sqrt{1+{\alpha_n^\prime(s)}^2}}{J_n(t,s)},\,\,\,\,\mbox{for a.e.}\,\,\,y \in \Delta_n.
 \end{equation*}
 Consequently, we obtain
 
 \begin{eqnarray*} \label{estimates}
||\sigma||_{L^p(\Omega)}^p &=& \sum_{n=1}^\infty\int_0^{\ve_n} \int_0^{1} \sigma_n((1-2t)s,\alpha_n(s))^p J_n(t,s)\,\mathrm{d}t\,\mathrm{d}s \\  
&\approx & \sum_{n=1}^\infty \int_{0}^{\ve_n} \int_0^{1} \frac{s^p}{J_n(t,s)^{p-1}}\,\mathrm{d}t\,\mathrm{d}s.
\end{eqnarray*}
Computing
$$D_{(t,s)}(y_1,y_2)=\begin{pmatrix}
  - 2s \,\,&\,\, 1-2t \\
 0 \,\,&\,\, {\alpha_n^\prime(s)}
 \\
 \end{pmatrix},$$
 we get
 $$J_n(t,s)=2s\,{\alpha_n^\prime(s)} \approx s^2.$$
 Finally, we have
 \begin{eqnarray*} \label{estimate}
||\sigma||_{L^p(\Omega)}^p \approx \sum_{n=1}^\infty  \ve_n^{3-p}.
\end{eqnarray*}
This immediately shows that with this construction we cannot have $\sigma\in L^3$. Moreover, it is enough to choose a sequence $\ve_n$ satisfying
$$\sum_{n=1}^\infty  \ve_n<+\infty,\quad \sum_{n=1}^\infty  \ve_n^\beta=+\infty$$
for all $\beta<1$, to prove $\sigma\notin L^p(\Omega)$ for all \,$p>2$. Take for instance \,$\ve_n=\frac{1}{n(\log(1+n))^2}$.

\section{Applications to the BV least gradient problem}\label{Sec.5}

We collect in this section some corollaries of the results of the previous sections, which give interesting proofs for some properties of the BV least gradient problem in dimension $d=2$. We need to restrict to $d=2$ because only in this framework rotated gradients have prescribed divergence. As we saw, in dimension $d=2$ all our results are valid for arbitrary strictly convex norms, and hence apply to the anisotropic least gradient problem.

In all the cases, we will suppose $g\in BV(\partial\Omega)$. Note that this assumption is required to apply the classical theory of optimal transport to $f=\partial_{\mathbf{t}} g$; this requires to transport a measure onto another. If $g$ was only in $L^1(\partial\Omega)$, then $f$ would be the (one-dimensional) derivative of an $L^1$ function, i.e. an element of the dual of Lipschitz functions (since $W^{-1,1}=(W^{1,\infty})'$). It is not surprising that a Monge-Kantorovich theory is also possible in this case (because formula \eqref{dual} characterizes the transport cost as the dual norm to the Lipschitz norm), see \cite{BouChaJim}, but no estimates are possible.

\begin{proposition}
If \,$\Omega\subset \R^2$ is strictly convex and $g\in BV(\partial\Omega)$, then Problem \eqref{leastgradient-inf} has a solution (i.e. Problems \eqref{leastgradient-pen} and \eqref{leastgradient-ext} have a solution whose trace is $g$).
\end{proposition}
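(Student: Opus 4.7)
The plan is to reduce \eqref{leastgradient-inf} to the Beckmann problem \eqref{Beckmann} via the rotation correspondence described in the introduction, solve the latter using the Monge--Kantorovich framework of Section~\ref{Sec.2}, and then show that the optimal flow carries no mass on $\partial\Omega$. This last point is what forces the candidate solution $u$ to satisfy $u_{|\partial\Omega}=g$ rather than only minimize the relaxed problems \eqref{leastgradient-pen}--\eqref{leastgradient-ext}.

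First I would set $f:=\partial_{\mathbf{t}} g$, which, since $\partial\Omega$ is a closed curve and $g\in BV(\partial\Omega)$, is a finite signed Borel measure on $\partial\Omega$ with total mass zero. Let $f=f^+-f^-$ be its Jordan decomposition and note $f^+(\partial\Omega)=f^-(\partial\Omega)$. By standard tightness/lower semicontinuity arguments there exists an optimal transport plan $\gamma$ for \eqref{kant|x-y|} between $f^+$ and $f^-$. Define the associated scalar and vector transport densities $\sigma_\gamma$ and $w_\gamma$ via \eqref{transport density def} and \eqref{transport density def-v}; by the computation given right after \eqref{Beckmann'}, $w_\gamma$ is an admissible and optimal flow for \eqref{Beckmann}.

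The key step is to prove $\|w_\gamma\|(\partial\Omega)=0$. Since $\|w_\gamma\|\leq \sigma_\gamma$, it is enough to show $\sigma_\gamma(\partial\Omega)=0$. Testing \eqref{transport density def} against $\phi=\ind_{\partial\Omega}$ gives
$$\sigma_\gamma(\partial\Omega)=\int_{\Omega\times\Omega}\|x-y\|\,\mathcal{L}^1\bigl(\{t\in[0,1]:(1-t)x+ty\in\partial\Omega\}\bigr)\,\mathrm{d}\gamma(x,y).$$
Since $\gamma$ is concentrated on $\partial\Omega\times\partial\Omega$, for each pair $(x,y)$ with $x\neq y$ strict convexity of $\Omega$ implies that the open segment $(x,y)$ lies in the interior of $\Omega$, so the set $\{t:(1-t)x+ty\in\partial\Omega\}$ is contained in $\{0,1\}$ and has zero 1D Lebesgue measure; on the diagonal $\{x=y\}$ the factor $\|x-y\|$ vanishes. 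Hence the integral is zero.

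Finally, I would invoke the one-to-one correspondence recalled in the introduction (following \cite{Rybka}): setting $\nabla u:=R_{-\pi/2}w_\gamma$ defines a BV function $u$ on $\Omega'\supset\Omega$ with $u=\tilde g$ outside $\Omega$, and the part of $\nabla u$ supported on $\partial\Omega$ precisely encodes the jump between $u_{|\partial\Omega}$ and $g$. Since $\|w_\gamma\|(\partial\Omega)=0$, this jump vanishes, so $u_{|\partial\Omega}=g$. Optimality of $w_\gamma$ for \eqref{Beckmann} then yields optimality of $u$ for \eqref{leastgradient-ext}, and with trace equal to $g$ it also solves \eqref{leastgradient-inf}. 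The only delicate point is verifying that the vanishing of the boundary mass of $w_\gamma$ really translates into the trace equality $u_{|\partial\Omega}=g$; this is essentially bookkeeping within the correspondence of \cite{Rybka}, but it is the step that genuinely uses the strict convexity assumption, which enters only through the segment-lies-in-the-interior property.
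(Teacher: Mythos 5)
Your proposal is correct and follows essentially the same route as the paper: reduce to the Beckmann problem via the rotation, and observe that strict convexity of $\Omega$ forces the transport density between boundary measures to give no mass to $\partial\Omega$ thanks to the representation formula \eqref{transport density def}, so the trace condition $u_{|\partial\Omega}=g$ holds. Your write-up merely makes explicit the computation that the paper leaves as a one-line appeal to that formula.
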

\begin{proof}
We have already discussed the fact that we just need to exclude that the solution of \eqref{leastgradient-pen} or \eqref{leastgradient-ext} has a part of its distributional derivative on the boundary. After the rotation, this means that its trace agrees with $g$ if and only if $\sigma(\partial\Omega)=0$. Yet, in strictly convex domains, the transport density does not give mass to the boundary, because of the representation formula \eqref{transport density def}. \end{proof}

\begin{proposition}\label{uniqBV}
If $\Omega\subset \R^2$ is strictly convex, and $g\in (BV\cap C^0)(\partial\Omega)$, then Problem \eqref{leastgradient-inf} has a unique solution.
\end{proposition}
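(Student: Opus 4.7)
The plan is to reduce uniqueness of the minimizer of \eqref{leastgradient-inf} to uniqueness of the optimal flow in the Beckmann problem \eqref{Beckmann} with datum $f = \partial_\mathbf{t} g$, and then further to uniqueness of the optimal transport plan in \eqref{kant|x-y|}, to which Proposition \ref{Unicity} can then be applied. First I would invoke the previous proposition to conclude that in a strictly convex domain every minimizer of \eqref{leastgradient-pen} has trace equal to $g$ on $\partial\Omega$, so the solution sets of \eqref{leastgradient-inf} and \eqref{leastgradient-pen} coincide. Then, under the rotation $w = R_{\frac{\pi}{2}}\nabla u$, minimizers $u$ of \eqref{leastgradient-inf} are in one-to-one correspondence (modulo an additive constant on $u$) with minimizers $w$ of \eqref{Beckmann} for $f=\partial_\mathbf{t} g$.

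The key step is to exploit the continuity assumption on $g$: since $g\in C^0(\partial\Omega)$, the tangential derivative $f = \partial_\mathbf{t} g$ carries no atoms, because an atom of $f$ at some $x_0\in\partial\Omega$ would correspond to a jump of $g$ at $x_0$, which is incompatible with continuity. Consequently, the Jordan decomposition $f = f^+ - f^-$ produces two atomless positive measures on $\partial\Omega$ with no common mass. I would then apply Proposition \ref{Unicity} (valid since $d=2$ and $\Omega$ is strictly convex) to obtain that the optimal transport plan $\gamma \in \Pi(f^+,f^-)$ for \eqref{kant|x-y|} is unique and induced by a map. Combining this with Proposition \ref{optimal flow comes from an optimal plan with general norm}, which asserts that every optimal flow $w$ for \eqref{Beckmann} has the form $w_\gamma$, I deduce that the optimal $w$ itself is unique.

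To close the argument I would observe that if $u_1, u_2$ are two minimizers of \eqref{leastgradient-inf}, then $R_{\frac{\pi}{2}}\nabla u_1 = R_{\frac{\pi}{2}}\nabla u_2$, hence $\nabla u_1 = \nabla u_2$ as vector measures on $\overline\Omega$, so $u_1 - u_2$ is constant in $\Omega$; the shared boundary condition $u_{1|\partial\Omega} = u_{2|\partial\Omega} = g$ then forces this constant to vanish. The main (and really only) subtle point I expect is the justification that continuity of $g$ is exactly what rules out atoms in $\partial_\mathbf{t} g$, which is a standard fact about 1D BV functions; beyond that, the proof is a straightforward concatenation of Propositions \ref{Unicity} and \ref{optimal flow comes from an optimal plan with general norm} via the rotated-gradient correspondence, together with the existence result from the preceding proposition.
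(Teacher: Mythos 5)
Your proposal is correct and follows essentially the same route as the paper: the rotation correspondence reduces uniqueness to that of the optimal flow, continuity of $g$ rules out atoms in $\partial_{\mathbf{t}}g$, and Propositions \ref{Unicity} and \ref{optimal flow comes from an optimal plan with general norm} then give uniqueness of $w$ and hence of $u$. The paper states this in two lines; your version merely fills in the same chain of reasoning explicitly, including the (correct) observation that the Jordan decomposition yields mutually singular, atomless $f^\pm$.
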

\begin{proof}
Using again the rotation trick, we just need to prove uniqueness of the transport density. The condition $g\in C^0$ implies that its tangential derivative has no atoms, and we can apply Proposition \ref{Unicity}.   
\end{proof}

The following result is probably the main contribution of this paper to the understanding of the anisotropic least gradient problem, as we are not aware of similar results already existing in the literature. 
\begin{theorem}\label{onlynew}
If \,$\Omega\subset \R^2$ is uniformly convex, and \,$g\in W^{1,p}(\partial\Omega)$ with \,$p\leq 2$, then the unique solution of Problem \eqref{leastgradient-inf} belongs to $W^{1,p}(\Omega)$.
\end{theorem}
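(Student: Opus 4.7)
The plan is to combine the rotation equivalence recalled in the introduction with the $L^p$ bound of Proposition \ref{Lp2}. Since $\partial\Omega$ is a $1$-dimensional closed curve, the Sobolev embedding $W^{1,p}(\partial\Omega) \hookrightarrow (BV \cap C^{0})(\partial\Omega)$ holds for every $p\geq 1$; therefore the first proposition of Section \ref{Sec.5} together with Proposition \ref{uniqBV} give existence and uniqueness of a solution $u$ of \eqref{leastgradient-inf} whose trace coincides with $g$. Moreover, $\Omega\subset\R^2$ means $d=2$, so the uniqueness assumption \textbf{(UA)} is automatically in force.

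Once $u$ is fixed, I would pass to the Beckmann formulation by setting $w:=R_{\pi/2}\nabla u$. By the equivalence recalled in the introduction, $w$ is an optimal flow for \eqref{Beckmann'} with source $f:=\partial_{\mathbf{t}} g$, the cost being measured with the rotation-norm $||\cdot||$ associated with $\varphi$. The hypothesis $g\in W^{1,p}(\partial\Omega)$ translates directly into $f \in L^p(\partial\Omega)$, and hence $f^{+},f^{-}\in L^p(\partial\Omega)$ (common mass may be subtracted without affecting the transport density).

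By Proposition \ref{optimal flow comes from an optimal plan with general norm}, $w$ has the form $w=w_\gamma$ for an optimal transport plan $\gamma$, with $|w_\gamma|\leq \sigma_\gamma$ as measures. Since $\Omega$ is uniformly convex, $p\leq 2$, and \textbf{(UA)} holds, Proposition \ref{Lp2} yields $\sigma_\gamma \in L^p(\Omega)$. Consequently $|w|\in L^p(\Omega)$, and applying the rotation $R_{-\pi/2}$ transfers this regularity back to $\nabla u$: the vector measure $\nabla u$ is absolutely continuous with density in $L^p(\Omega, \R^2)$. Equivalence of all norms on $\R^2$ ensures the conclusion is independent of which norm ($\varphi$, $||\cdot||$, or Euclidean) we use to measure $\nabla u$, so we obtain $u\in W^{1,p}(\Omega)$.

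Rather than a genuine obstacle, the only delicate point is the bookkeeping: one must verify that the Sobolev regularity on the $1$-dimensional boundary really does imply both the continuity of $g$ (needed to apply \ref{uniqBV}) and the $L^p$-integrability of the tangential derivative (needed for the summability estimate), and one must carry the equivalence between $\varphi$, $||\cdot||$ and the Euclidean norm throughout the chain of reductions. Both are immediate in dimension two, which is why no separate lemmas are required.
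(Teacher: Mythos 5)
Your proof is correct and follows essentially the same route as the paper: reduce to the Beckmann/transport formulation via the rotation $R_{\pi/2}$, note that $g\in W^{1,p}(\partial\Omega)$ gives $f^{\pm}=(\partial_{\mathbf{t}}g)^{\pm}\in L^{p}(\partial\Omega)$, and apply Proposition \ref{Lp2} to get $\sigma\in L^{p}(\Omega)$ and hence $\nabla u\in L^{p}(\Omega,\R^{2})$. The paper's own proof is just a terser version of this; the extra bookkeeping you supply (continuity of $g$ for uniqueness, the identification $w=w_{\gamma}$ with $||w_\gamma||\le\sigma_\gamma$, equivalence of norms) is all accurate.
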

\begin{proof}
Setting \,$f=\partial_{\mathbf{t}} g$ and using $f^+$ and $f^-$ as its positive and negative parts, the condition $g\in  W^{1,p}(\partial\Omega)$  implies $f^\pm\in L^p(\partial\Omega)$. Hence, Proposition \ref{Lp2} implies $\sigma\in L^p(\Omega)$, and then $\nabla u\in L^p(\Omega,\mathbb{R}^2)$.
\end{proof}

\begin{proposition}
Even if \,$\Omega\subset \R^2$ is a disk, for every \,$p>2$ there exists \,$g\in \Lip(\partial\Omega)$ such that the unique $u$ solution of Problem \eqref{leastgradient-inf} is not in $W^{1,p}(\Omega)$.
\end{proposition}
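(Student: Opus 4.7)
The strategy is to recycle the counter-example from Section~\ref{Sec.4}. I would let $f^+,f^-$ be the boundary measures built there on the disk $\Omega$, i.e. $f^\pm=\ind_{\chi^\pm}$ with $\chi^\pm=\bigcup_n\chi_n^\pm$ and $\mathcal H^1(\chi_n^\pm)=\ve_n$, where $\ve_n$ is chosen (e.g. $\ve_n=1/(n\log^2(1+n))$) so that $\sum_n\ve_n<\infty$ (giving finite total mass) but $\sum_n\ve_n^{3-p}=+\infty$ for every $p>2$. Since $f^+(\partial\Omega)=f^-(\partial\Omega)$ and $f^\pm\in L^\infty(\partial\Omega)$, one can define $g$ on the circle $\partial\Omega$ as a primitive of $f^+-f^-$ along the boundary; such a primitive is well-defined (periodic) and Lipschitz continuous, so $g\in\Lip(\partial\Omega)\subset (BV\cap C^0)(\partial\Omega)$, and in particular Proposition~\ref{uniqBV} applies, so the solution $u$ of \eqref{leastgradient-inf} exists and is unique.

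Next, I would use the equivalence between \eqref{leastgradient-inf} and the Beckmann problem \eqref{Beckmann}: the unique minimizer $u$ satisfies $\nabla u=R_{-\pi/2}\,w$, where $w$ is the optimal flow. By Proposition~\ref{optimal flow comes from an optimal plan with general norm}, $w=w_\gamma$ for the unique optimal transport plan $\gamma$ between $f^+$ and $f^-$, and in particular $|\nabla u|=|w_\gamma|$ a.e.\ on $\Omega$. I would then argue that on each region $\Delta_n$ (the union of the transport rays from $\chi_n^+$ to $\chi_n^-$), one has the pointwise identity $|w_\gamma|=\sigma_\gamma$. This is because, with the parameterization used in Section~\ref{Sec.4}, the transport map is $(s,\alpha_n(s))\mapsto(-s,\alpha_n(s))$, so every transport ray inside $\Delta_n$ is a horizontal chord; at a.e.\ interior point $y\in\Delta_n$, exactly one such chord passes through $y$, and both defining integrals \eqref{transport density def} and \eqref{transport density def-v} reduce to the same scalar density times a common unit direction, yielding $|w_\gamma(y)|=\sigma_\gamma(y)$.

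With these two facts I conclude
\[
\int_\Omega|\nabla u|^p\,\dd x=\int_\Omega|w_\gamma|^p\,\dd x\geq\sum_{n=1}^\infty\int_{\Delta_n}\sigma_\gamma^p\,\dd x\approx\sum_{n=1}^\infty\ve_n^{3-p}=+\infty,
\]
where the $\Delta_n$ are essentially disjoint (by construction) and the last comparison is exactly the computation carried out in Section~\ref{Sec.4}. Hence $u\notin W^{1,p}(\Omega)$ for every $p>2$.

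The only non-routine point is the identification $|w_\gamma|=\sigma_\gamma$ on each $\Delta_n$ (and hence $|\nabla u|=\sigma$ there). I expect this to be the main obstacle, since in general one only has $|w_\gamma|\leq \sigma_\gamma$; however the parallelism of the transport rays inside each $\Delta_n$ forces all transport through a fixed point $y$ to proceed in a single direction, which upgrades the inequality to an equality. Everything else (existence, uniqueness, the Lipschitz character of $g$, the connection to Beckmann) is already available from the earlier sections.
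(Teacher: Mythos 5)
Your proposal is correct and follows exactly the paper's route: the paper's entire proof is the one-line instruction to take $g$ as the antiderivative of the $f=f^+-f^-$ from the Section~\ref{Sec.4} counter-example. The extra step you isolate --- upgrading $||w_\gamma||\leq\sigma_\gamma$ to equality on each $\Delta_n$ because a.e.\ point lies on a single transport ray, so that $\sigma\notin L^p$ really does force $\nabla u\notin L^p$ --- is a genuine detail the paper leaves implicit, and your justification of it is sound.
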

\begin{proof}
It is enough to take \,$g$\, as the antiderivative of the function $f=f^+-f^-$ of the counter-example of Section \ref{Sec.4}.   
\end{proof}

\begin{proposition}\label{C1a}
If \,$\Omega\subset \R^2$ is uniformly convex, and \,$g\in C^{1,\alpha}(\partial\Omega)$ with $\alpha<1$, then the unique solution of Problem \eqref{leastgradient-inf} belongs to $W^{1,p}(\Omega)$ for \,$p=2/(1-\alpha)$.\end{proposition}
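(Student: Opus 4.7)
The plan is to reduce the statement to a direct application of Proposition \ref{LpCalpha} via the rotation/Beckmann correspondence described in Section \ref{intro}. Set $f := \partial_{\mathbf{t}} g$, which, since $g \in C^{1,\alpha}(\partial\Omega)$, is a $C^{0,\alpha}(\partial\Omega)$ function. I would decompose $f = f^+ - f^-$ with $f^\pm := \max(\pm f, 0)$; using the elementary fact that $\max$ of two $C^{0,\alpha}$ functions is still $C^{0,\alpha}$, both $f^\pm$ inherit the Hölder regularity. By construction $f^+$ and $f^-$ have no common mass, as their pointwise product vanishes.

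Next, I would invoke the equivalence between \eqref{leastgradient-inf} and the Beckmann problem \eqref{Beckmann}: if $u$ solves \eqref{leastgradient-inf}, then $w := R_{\pi/2} \nabla u$ is an optimal flow for \eqref{Beckmann} with divergence $f$, and $\varphi(\nabla u) = \|w\|$ as measures. By Proposition \ref{optimal flow comes from an optimal plan with general norm}, there exists an optimal transport plan $\gamma$ between $f^+$ and $f^-$ such that $w = w_\gamma$, and hence $\|w\| \leq \sigma_\gamma =: \sigma$. The uniqueness of both the solution $u$ (via Proposition \ref{uniqBV}, since $g \in C^{1,\alpha} \subset C^0$) and of $\sigma$ (via Proposition \ref{Unicity}, since $f$ being $C^{0,\alpha}$ is in particular atomless) ensures that we really are estimating the correct object.

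Finally, applying Proposition \ref{LpCalpha} with the data $f^\pm \in C^{0,\alpha}(\partial\Omega)$ yields $\sigma \in L^p(\Omega)$ with $p = 2/(1-\alpha)$. From $|\nabla u| = \|w\| \leq \sigma$ (where we used the equivalence of $\varphi$ with the Euclidean norm on $\R^2$ to compare the anisotropic and Euclidean total variations up to a multiplicative constant), we conclude $\nabla u \in L^p(\Omega, \R^2)$, and since $u \in BV(\Omega) \cap L^\infty(\Omega)$ (the $L^\infty$ bound by $\|g\|_\infty$ following from standard maximum-principle arguments for the least gradient problem, or alternatively from $BV \hookrightarrow L^1$ in bounded planar domains), we obtain $u \in W^{1,p}(\Omega)$.

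There is essentially no serious obstacle here, since the analytic heavy lifting is concentrated in Proposition \ref{LpCalpha}. The only point worth double-checking is that $f^+, f^-$ defined by pointwise truncation really are in $C^{0,\alpha}(\partial\Omega)$ and that they share no mass, so that Proposition \ref{LpCalpha} applies with its stated hypotheses; this is immediate from the pointwise identities $|f^\pm(x) - f^\pm(y)| \leq |f(x) - f(y)|$ and $f^+(x) f^-(x) = 0$ for every $x \in \partial\Omega$.
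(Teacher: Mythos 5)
Your proposal is correct and follows exactly the paper's route: the paper's own proof of this proposition is the one-line observation that it is a consequence of Proposition \ref{LpCalpha}, with the reduction $f=\partial_{\mathbf{t}}g$, $f^\pm=\max(\pm f,0)\in C^{0,\alpha}(\partial\Omega)$ and the rotation correspondence being exactly the mechanism the authors use (spelled out for Theorem \ref{onlynew} and left implicit here). Your extra verifications (no common mass, uniqueness via Propositions \ref{uniqBV} and \ref{Unicity}, norm equivalence) are all consistent with the paper's framework and add nothing that would change the argument.
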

\begin{proof}
This is a consequence of Proposition \ref{LpCalpha}. \end{proof}

\begin{remark} Note that the above \,$W^{1,p}$ regularity also implies H\"older bounds, since in dimension $d=2$ we have $W^{1,p}\subset C^{0,1-2/p}$. In particular, using $p=2/(1-\alpha)$, we get $g\in C^{1,\alpha}(\partial\Omega)\impl u\in C^{0,\alpha}(\Omega)$. Yet, this bound is not optimal, as it is known (see, for instance, \cite{44}) that we have $g\in C^{1,\alpha}(\partial\Omega)\impl u\in C^{0,(\alpha+1)/2}(\Omega)$. It is interesting to note that one would obtain exactly the desired $C^{0,(\alpha+1)/2}$ behavior if it was possible to use the Sobolev injection of $W^{1,p}$ corresponding to dimension 1 instead of dimension 2. This seems reasonable, using the fact that level lines of $u$ are transport rays in the transport problem from $f^+$ to $f^-$, hence are line segments, but it is not easy to justify and goes beyond the scopes of this paper.
\end{remark}

\begin{proposition}\label{C11}
If \,$\Omega\subset \R^2$ is uniformly convex, and \,$g\in C^{1,1}(\partial\Omega)$, then the unique solution of Problem \eqref{leastgradient-inf} is Lipschitz continuous.
\end{proposition}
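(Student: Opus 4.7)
The plan is to reduce this statement directly to the $\alpha=1$ endpoint of Proposition \ref{LpCalpha}. First I would set $f:=\partial_{\mathbf t} g$, which by the assumption $g\in C^{1,1}(\partial\Omega)$ is a Lipschitz function on the closed curve $\partial\Omega$. Its positive and negative parts $f^+:=\max(f,0)$ and $f^-:=\max(-f,0)$ are then themselves Lipschitz (with the same constant as $f$), hence in $C^{0,1}(\partial\Omega)$, and they are concentrated on the disjoint sets $\{f>0\}$ and $\{f<0\}$, so the no-common-mass assumption used throughout Section \ref{Sec.3} is automatic.

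Next, I would invoke Proposition \ref{LpCalpha} with $\alpha=1$: since $\Omega$ is uniformly convex, $d=2$ (so assumption \textbf{(UA)} holds), and $f^\pm\in C^{0,1}(\partial\Omega)$, this yields $\sigma\in L^\infty(\Omega)$ by passing to the limit $p\to\infty$ in the estimate produced there. More concretely, the key bound already contained in the proof of Proposition \ref{LpCalpha} is that $D(x)^{2-p}f^+(x)^p\le CD(x)^{2-p+p\alpha}$, which for $\alpha=1$ leaves an exponent equal to $2$, giving a uniform bound independent of $p$, and symmetrically for $f^-$.

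Finally, I would translate the $L^\infty$ bound on the transport density back to a Lipschitz bound on $u$ via the rotation correspondence introduced in Section \ref{intro}. Namely, the unique solution $u$ of \eqref{leastgradient-inf} satisfies $\nabla u=R_{-\pi/2} w$ where $w$ is the optimal Beckmann flow associated to $f$; by Proposition \ref{optimal flow comes from an optimal plan with general norm} we have $w=w_\gamma$ with $|w_\gamma|\le \sigma_\gamma=\sigma$ (as vector/scalar measures). Thus $|\nabla u|\le \sigma\in L^\infty(\Omega)$, and since $\Omega$ is convex this gives $u\in W^{1,\infty}(\Omega)=\Lip(\Omega)$.

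The only conceptual point requiring a bit of care is the passage $\alpha\to 1$ (equivalently $p\to\infty$) in Proposition \ref{LpCalpha}: the finite-$p$ bound there is $\|\sigma\|_{L^p}^p\le C(\|f^+\|_{C^{0,1}},\|f^-\|_{C^{0,1}},\kappa,\diam\Omega)$ with a constant that, thanks to the vanishing of the critical exponent $2-p+p\alpha$ for $\alpha=1$, stays uniformly controlled; taking $p$-th roots and letting $p\to\infty$ yields the $L^\infty$ bound on $\sigma$. No new difficulty arises beyond what is already carried out in Section \ref{Sec.3}, so I do not expect any real obstacle in writing the proof.
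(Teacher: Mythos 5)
Your proposal is correct and follows exactly the paper's route: the paper proves this proposition by simply invoking Proposition \ref{LpCalpha} with $\alpha=1$ (which gives $\sigma\in L^\infty$) and the rotation correspondence between $\nabla u$ and the optimal flow. Your additional details (Lipschitz regularity of $f^\pm=(\partial_{\mathbf t}g)^\pm$, disjointness of $\{f>0\}$ and $\{f<0\}$, and the uniformity of the constant as $p\to\infty$) are all consistent with what is already carried out in Section \ref{Sec.3}.
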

\begin{proof}
This is also a consequence of Proposition \ref{LpCalpha}.   
\end{proof}

\begin{remark}Note that the above Lipschitz result is optimal, and perfectly coherent with the theory involving the bounded slope condition (see, for instance \cite{Stampacchia,Cellina2001}), since $C^{1,1}$ functions on the boundary of uniformly convex domains satisfy the bounded slope condition (see \cite{Hartman}).
\end{remark}

We finish this section with two last remarks.

\begin{remark}
The strict convexity assumption on $\varphi$ is crucial in this framework in order to obtain the uniqueness of the minimizers (Proposition \ref{uniqBV}) and also for the approximation procedures performed in Section \ref{Sec.3} which allow to translate the estimates in the atomic case into estimates which are valid in the general case. Yet, the $L^p$ bounds obtained in Section \ref{Sec.3} do not depend on how much the norm $||\cdot||$ is strictly convex, and, whenever $c_0|\cdot|\leq ||\cdot ||\leq c_1|\cdot|$, the constants in the estimates only depend on $c_0$ and $c_1$. Hence, given an arbitrary norm $\varphi$ (which is of course equivalent to the Euclidean one), it is possible, just by approximating it with $\varphi_\ve(z)=\varphi(z)+\ve|z|$, to obtain the results of Theorem \ref{onlynew}, Proposition \ref{C1a} and Proposition \ref{C11}, but in this case the estimates will be true for at least one minimizer (the one selected by this approximation), as there is no more any guarantee of uniqueness.
\end{remark}
\begin{remark} We observe that we have not used Proposition \ref{Lpd-1} in this Section. Indeed, in the framework of the least gradient problem assuming assumptions on $f^+$ (i.e. on the positive part of the tangential derivative of the boundary datum) are not natural at all. Proposition \ref{Lpd-1} has been inserted in Section \ref{Sec.3} just because it was an easy consequence of Proposition \ref{Lptau}. Also consider that a simple result which could have been proven in Section \ref{Sec.3} was the implication $f^\pm\in L^p(\partial\Omega)\impl \sigma\in L^p(\Omega)$ for arbitrary $p$ (including $p>2$) under the assumption $\spt(f^+)\cap \spt(f^-)=\emptyset$, but we did not considered it because this assumption, in terms of $g$, is very innatural: it would mean that $g$ has some flat regions separating those with positive and negative derivatives.
\end{remark}

\end{document}